\documentclass[11pt]{article}
\usepackage{epsfig,amsmath,amssymb,amsfonts,amstext,amsthm,mathrsfs}
\usepackage{latexsym,graphics,epsf,epsfig,psfrag}
\usepackage{cite,dsfont,color,epstopdf,fixmath}

\topmargin=-0.5in \headsep=0.5in \oddsidemargin=0.0in
\textwidth=6.5in
\textheight=9.0in
\parskip=1.5ex
\parindent=2ex

\footnotesep=3.0ex


\newcommand{\bd}{\text{\boldmath{$d$}}}

\newcommand{\mg}{\mathrm \Gamma}
\newcommand{\ml}{\mathrm \Lambda}
\newcommand{\mo}{\mathrm\Omega}

\newtheorem{corollary}{\textbf{Corollary}}
\newtheorem{lemma}{\textbf{Lemma}}
\newtheorem{theorem}{\textbf{Theorem}}
\newtheorem{proposition}{\textbf{Proposition}}
\newtheorem{remark}{\textbf{Remark}}

\newtheorem{example}{\textbf{Example}}

\newcommand{\nn}{\nonumber}

\newcommand{\mE}{\mathbb{E}}
\newcommand{\mP}{\mathbb{P}}

\newcommand{\ess}{\text{ess}}
\usepackage{xcolor,ulem,pdfsync}
\normalem

\newcommand{\gf}[1]{\textcolor{red}{#1}}

\newcommand\redsout{\bgroup\markoverwith{\textcolor{red}{\rule[0.5ex]{2pt}{0.8pt}}}\ULon}
\newcommand{\zsfd}[1]{\ifmmode\text{\redsout{\ensuremath{#1}}}\else\redsout{#1}\fi}

\begin{document}

\begin{center}
	\baselineskip 1.3ex {\Large \bf Quickest Change Detection under Transient Dynamics: Theory and Asymptotic Analysis
	\let\thefootnote\relax\footnote{The material in this paper was presented
		in part at the IEEE International Symposium on Information Theory (ISIT), Aachen, Germany, June
		2017~\cite{Zou2017QCD}.}
	\let\thefootnote\relax\footnote{The work of S. Zou and V. V. Veeravalli was supported in part by the National Science Foundation (NSF) under grants CCF 16-18658  and CIF 15-14245,  by the Air Force Office of Scientific Research (AFOSR) under grant FA9550-16-1-0077, and  by ARL under cooperative agreement W911NF-17-2-0196, through the University of Illinois at Urbana-Champaign.
		The work of G. Fellouris was supported by the NSF under grant CIF 15-14245, through the University of Illinois at Urbana-Champaign.}}
	\\
	\vspace{0.15in} Shaofeng Zou$^\dagger$, Georgios Fellouris*, Venugopal V. Veeravalli*
	\\
	\vspace{0.15in}$^\dagger$University at Buffalo, the State University of New york
	\\
	\vspace{0.15in}*University of Illinois at Urbana-Champaign
	\\
	\vspace{0.15in}Email: szou3@buffalo.edu, fellouri@illinois.edu, vvv@illinois.edu
	
\end{center}

	\begin{abstract}
		The problem of quickest change detection (QCD) under transient dynamics is studied, where the change from the initial distribution to the final persistent distribution does not happen instantaneously, but after a series of transient phases. The observations within the different phases are generated by different distributions. The objective is to detect the change as quickly as possible, while controlling the average run length (ARL) to false alarm, when the durations of the transient phases are completely unknown.
		Two algorithms are considered, the dynamic Cumulative Sum (CuSum) algorithm, proposed in  earlier work, and a newly constructed weighted dynamic CuSum algorithm.  Both algorithms admit recursions that facilitate their practical implementation, and they are adaptive to the unknown transient durations.
		Specifically, their asymptotic optimality is established with respect to both Lorden's and Pollak's criteria as the ARL to false alarm and the durations of the transient phases go to infinity at any relative rate.
		Numerical results are provided to demonstrate the adaptivity of the proposed algorithms, and to validate the theoretical results.
	\end{abstract}
	\section{Introduction}
	
	In the problem of quickest change detection (QCD), a decision maker obtains observations sequentially, and at some unknown time (\textit{change-point}), an event occurs and causes the distribution of the subsequent observations to undergo a change. The objective of the decision maker is to find a stopping rule that detects the change as quickly as possible, subject to a constraint on the false alarm rate. In classical QCD formulations \cite{veeravalli2013quickest,poor-hadj-qcd-book-2009,basseville1993detection,tartakovsky2014sequential,aminikhanghahi2017survey,truong2018review}, the statistical behavior of the samples is characterized by the pre-change distribution and \textit{one} post-change distribution, which generate the samples before and after the change-point, respectively. However, there are many practical applications with more involved statistical behavior after the change-point. For example, when a line outage occurs in a power system, the system goes through multiple transient phases before entering a persistent phase \cite{Rovatsos2:2016}. 
	
	Motivated by this type of applications, in this work we study the problem of QCD under transient post-change dynamics, in which the pre-change distribution does not change to the persistent distribution instantaneously, but after a number of transient phases. Within the transient and persistent phases, the observations are generated by distributions different from the initial one, and the problem is to detect the change as soon as possible either during a transient phase or during the persistent phase.  
	
	\subsection{Related Work}
We first stress that the  QCD problem under transient post-change dynamics that we study in this work is  different from the problem of detecting \textit{transient changes}, studied in \cite{guepie2012} and \cite{ebrahimzadeh2015sequential}, in which the system goes back to its pre-change mode after a single transient phase, and where it is only possible to detect the change within the transient phase.	Moreover, the setup in this paper  is fundamentally  different from the  model selection setup  in \cite{van2014almost,davis2013consistency}, where the minimum description length (MDL) principle is used to estimate the number of transient phases and the location of the change-points, \textit{with a fixed number of observations}. Here, we consider the case where the observations are collected \textit{sequentially}, and we are only interested in detecting 
in real time whether the distributions no longer follow the  pre-change distribution.

	
	
	The QCD problem under transient dynamics was studied in \cite{moustakides2016sequentially} when there is only \textit{one} transient phase that lasts for a \textit{single} observation, where a generalization of  Page's Cumulative Sum (CuSum) algorithm \cite{page1954continuous} is proposed  and shown to be optimal under Lorden's criterion \cite{lorden1971procedures}.
	A Bayesian formulation is proposed in \cite{george2017icassp}, in which it is assumed that there is an arbitrary, yet known, number of transient phases, whose durations are geometrically distributed. The proposed algorithm in \cite{george2017icassp} is a generalization of the Shiryaev-Roberts rule \cite{shiryaev1963optimum,shiryaev2007optimal}.  A non-Bayesian formulation is considered in \cite{Rovatsos2:2016}, where it is assumed that the durations are deterministic and \textit{completely unknown}. The proposed algorithm in \cite{Rovatsos2:2016} is a generalization of Page's CuSum test,  called the \textit{dynamic CuSum} (D-CuSum) algorithm. 	The algorithms in \cite{Rovatsos2:2016} and \cite{george2017icassp} are shown to admit a recursive structure, but are not supported by any theoretical performance analysis.

To be more precise,  the D-CuSum algorithm was derived in \cite{Rovatsos2:2016}  by reformulating the QCD problem as a dynamic composite hypothesis testing problem, where a hypothesis test  is conducted at each time instant $k$  until a stopping criterion is met. At each time $k$, the null hypothesis corresponds to the case that the change from the pre-change distribution has not occurred yet, and the alternative hypothesis corresponds to the case that the change has already occurred. Under the null hypothesis, all samples are distributed according to the pre-change distribution; under the alternative hypothesis, the distribution of the samples up to time $k$ depends on the unknown change-point and the unknown durations of the transient phases, and is thus composite. The test statistic at time $k$ is the \textit{generalized} likelihood ratio between the two hypotheses, and the corresponding stopping rule is obtained by comparing the test statistic against a pre-specified threshold.  We stress that the implementation of this algorithm does not require knowledge of the transient durations, which are considered to be deterministic and completely unknown.

Since the post-change distribution in our formulation is determined by the unknown durations of the transient phases,  the proposed problem falls into the framework of QCD with a composite post-change distribution \cite{lai1998information,lai1995sequential,siegmund1995using}.
Our work differs from this literature  in three major ways. First, thanks to the special structure of our problem, the proposed detection statistics enjoy recursions, a very important feature  for practical implementation that is  typically absent in \cite{lai1998information,lai1995sequential,siegmund1995using}.  Second, our asymptotic analysis is  novel and challenging due to the fact that  it requires not only the ARL to false alarm, but also the parameters of the post-change distribution (transient durations) to go to infinity. Third, the distribution of the samples within each phase can be arbitrary (may not belong to an exponential family), and the parameters of the post-change distribution (transient durations) are discrete and do not belong to a compact parameter space.

\subsection{Main Contributions}
The first contribution of this work is  that, under a certain condition on the pre/post-change distributions, we obtain a lower bound on the ARL to false alarm of the D-CuSum algorithm, which can be used for an explicit  selection of the threshold.  

The second contribution of this paper is that we propose an alternative algorithm, to which we refer as \textit{weighted} D-CuSum (WD-CuSum) algorithm, which also admits a recursive structure and  for which  we derive a \textit{universal} lower bound on the ARL.  This algorithm is a modification of the D-CuSum algorithm, where   the test statistic at time $k$ is a \textit{weighted} generalized likelihood ratio between the two hypotheses described above. The key idea is that instead of taking a maximum likelihood approach with respect to the  composite alternative hypothesis as in the D-CuSum algorithm, we take a mixture approach and then replace the sum in the mixture with a max in order to obtain a recursive structure for the resulting algorithm. 

The third contribution of this work is that we conduct an asymptotic analysis for the  performance of   D-CuSum  and  WD-CUSUM, which  demonstrates the statistical efficiency and adaptivity of both algorithms to the unknown transient durations. For this analysis, we adopt a worst-case scenario for the unknown change-point, and the  performance metrics of interest that we consider are the worse-case average detection delays (WADD) as defined by Lorden  \cite{lorden1971procedures} and Pollak \cite{pollak1985optimal}. As mentioned earlier, the exact minimizer of Lorden's WADD has been derived only in the special case of a single transient phase that lasts for a single observation \cite{moustakides2016sequentially}. As the proposed algorithms \textit{do not make any assumptions regarding the (deterministic) durations of the transient phases},  our goal is to show that they have ``good'' performance under any possible transient durations. In order to  do so,  we obtain an asymptotic  lower bound of the WADD  as the durations of the transient phases  and the ARL go to infinity \textit{at any relative rate}, and we show that this  lower bound 
 is achieved by the WD-CuSum algorithm. We stress that this asymptotic optimality is achieved for \textit{any}  divergence rate of  the transient phases, implying the adaptivity of the algorithm to the unknown transient durations. Similar asymptotic optimality  results are also obtained for the D-CuSum algorithm.

In order to demonstrate the performance of the proposed algorithms,  we conduct a simulation study which illustrates  how  both the D-CuSum and the WD-CuSum algorithms  are adaptive to the unknown transient durations and have similar performances for any practical purposes.
Moreover, we propose a heuristic approach for the selection of the
weights of the WD-CuSum algorithm in the finite regime, which balances the performance within the transient and persistent phases.

	\subsection{Paper Organization}
	
	The remainder of this paper is organized as follows. In Section \ref{sec:model}, we formulate the problem mathematically.
	In Section \ref{sec:algorithms}, we introduce the D-CuSum and the WD-CuSum algorithms. In Section \ref{sec:lowerarl} we establish lower bounds on the ARL to false alarm for both algorithms. In Section \ref{sec:asymptotic}, we demonstrate the asymptotic optimality of both algorithms.  In Section \ref{sec:numerical}, we present the numerical results and propose a heuristic approach of choosing weights for the WD-CuSum algorithm. Finally, in Section \ref{sec:con}, we provide some concluding remarks.
	
	\section{Problem Model}\label{sec:model}

	Consider a sequence of independent random variables $\{X_k\}_{k=1}^\infty$, observed sequentially by a decision maker. At an unknown change-point $v_1$, an event occurs and
	$\{X_k\}_{k=v_1}^\infty$ undergoes a change in distribution from the initial distribution, $f_0$. It is assumed that this change goes through $L-1$ transient phases before entering a persistent phase.  Each phase $i$ begins at an unknown starting point $v_i$, and the observations within this phase are generated by a known distribution $f_i$, for $1\leq i\leq L$. The duration of $i$-th transient phase  is denoted by $d_i= v_{i+1}-v_i$, for $1\leq i\leq L-1$. More specifically, the observations are distributed as follows:
	\begin{flalign}
	X_k\sim f_i, \text{ if } v_i\leq k<v_{i+1},
	\end{flalign}
	for $0\leq i\leq L$, where $v_0= 1$, $v_1\leq v_2\leq\cdots\leq v_L$, and $v_{L+1}=\infty$. We assume that $L$ is known in advance and so are the densities $f_i$, $0\leq i\leq L$.  	On the other hand, the change point $v_1$ and the vector of transient durations $\bd= \{d_i,1\leq i\leq L-1\}$ are assumed to be deterministic and completely \textit{unknown}.

	The goal is to detect the change reliably and quickly based on the sequentially acquired observations. That is, if $\mathcal F_k$ is the $\sigma$-algebra generated by the first $k$ observations, i.e., $\mathcal F_k=\sigma(X_1,\ldots,X_k)$, where $k=1,2,\ldots$, we want to find a $\{\mathcal F_k\}_{k\in \mathbb N}$-stopping time that achieves ``small" detection delay, while controlling the rate of false alarms.
	
To be more specific,  we denote by $\mP_{\infty}$ and $\mE_{\infty}$ the probability measure and the corresponding expectation when $v_1=\infty$, i.e., when there is no change, and  for any stopping time $\tau$ we define the ARL to false alarm as follows:
\begin{flalign}
	\text{ARL}(\tau)=&\mE_{\infty}[\tau].
\end{flalign}
The first requirement for a stopping rule is to  control the expected time to false alarm above a user-specified level, $\gamma>1$, i.e., to belong to  $\mathcal C_\gamma=\{\tau:  \text{ARL}(\tau)\geq \gamma\}$.

In order to quantify the performance of a stopping rule, we 
 denote by   $\mP^{\bd}_{v_1}$   the probability measure with the change-point at $v_1$ and the vector of transient durations $\bd$, and we denote by $\mE^\bd_{v_1}$ the corresponding expectation. Then, for a given  vector of transient durations $\bd$, the worst-case average detection delay of  a stopping time $\tau$ under Pollak's criterion \cite{pollak1985optimal} is  
	\begin{flalign}
	J^\bd_\text{P}(\tau)=&\sup_{v_1 \geq 1}\mE^\bd_{v_1}[\tau-v_1|\tau \geq v_1],
	\end{flalign}
and  under Lorden's criterion \cite{lorden1971procedures} 
	\begin{flalign}
	J^\bd_\text{L}(\tau)=&\sup_{v_1\geq 1}\ess\sup\mE^\bd_{v_1}[(\tau-v_1)^+|X_1,\ldots,X_{ v_1-1}],
	\end{flalign}
	where $(\tau-v_1)^+=\max\{\tau-v_1,0\}$. Thus, for any  given vector of transient durations $\bd$, we have the following two optimization problems:
	\begin{flalign}
	\underset{\tau\in\mathcal C_\gamma}{\text{inf}} J^\bd_\text{P}(\tau), \label{eq:p} \\
	\underset{\tau\in\mathcal C_\gamma}{\text{inf}} J^\bd_\text{L}(\tau). \label{eq:l}
	\end{flalign}
An exact solution to \eqref{eq:l} has been obtained   for any given $\gamma$  only when $L=1$ and $d_1=1$ \cite{moustakides2016sequentially}. To the best of our knowledge, problem  \eqref{eq:p} has not been solved for any value of $L$ and $\bd$.  However, our interest in this work is on the case that the  vector of transient durations $\bd$ is completely unknown,  thus, our goal is not on solving \eqref{eq:l} or \eqref{eq:p} for a particular choice of $\bd$. Instead, our goal is to obtain algorithms that (i) control the ARL to false alarm and (ii) have a small WADD \textit{for any value of $\bd$}.  Specifically, we will introduce two stopping rules  (Section \ref{sec:algorithms}),  show how to design them in order to belong to $C_\gamma$ for any user-specified $\gamma>1$ (Section \ref{sec:lowerarl}), and also show that they  attain \eqref{eq:p} and \eqref{eq:l} up to a first-order approximation as $\gamma \rightarrow \infty$ and $d \rightarrow \infty$ (Section \ref{sec:asymptotic}).
	
	
	

\subsection{Notation}
For $i=1,\ldots, L$, we denote by
$$I_i=\int f_i \, \log \frac{df_i}{df_0}$$ the Kullback-Leibler (KL) divergence between $f_i$ and $f_0$,  which we assume to be positive and finite.  
For $i=1,\ldots, L$,  we set 
	\begin{flalign}
	Z_i(X_k)=\log\frac{f_i(X_k)}{f_0(X_k)},
	\end{flalign}
	i.e., $Z_i(X_k)$ the log-likelihood ratio  between $f_i$ and $f_0$ for  sample $X_k$, $i=1,\ldots, L$, $k=1,2, \ldots$.	Moreover, we set
	\begin{flalign}
	 \ml_i[k_1,k_2]= \prod_{j=k_1}^{k_2}  \frac{f_i(X_j)}{f_0(X_j)}, \; \ml_i[k_1,k_2)= \prod_{j=k_1}^{k_2-1}  \frac{f_i(X_j)}{f_0(X_j)}.
	\end{flalign}
	
	We denote the largest integer that is smaller than $x$ as $\lfloor x\rfloor$, and the smallest integer that is larger than $x$ as $\lceil x\rceil$. We define $\sum_{j=n_1}^{n_2}X_j=0$ and $\prod_{j=n_1}^{n_2}X_j=1$ if $n_1>n_2$. We denote $x=o(1)$, as $c\rightarrow c_0$ if $\forall\epsilon>0$, $\exists\delta>0$, s.t., $|x|\leq\epsilon$ if $|c-c_0|<\delta$. We denote $g(c)\sim h(c)$ as $c\rightarrow c_0$, if $\lim_{c\rightarrow c_0} \frac{f(c)}{g(c)}=1$.

\section{The Algorithms}\label{sec:algorithms}

In this section, we introduce the proposed algorithms,  we show that they admit simple recursive structures. In order to do so, we reformulate the QCD problem as a dynamic composite hypothesis testing problem, as in \cite{Rovatsos2:2016}, where which  at each time instant $k$ we  distinguish the following two hypotheses
\begin{flalign}\label{eq:hypothesistesting}
\mathcal H_0^k&:k<v_1,\nn\\
\mathcal H_1^k&:k\geq v_1.
\end{flalign}
The  process stops once a decision in favor of the alternative hypothesis is reached; otherwise, a new sample is taken. Under  $\mathcal H_0^k$, the samples $X_1,\ldots, X_k$ are distributed according to $f_0$. The alternative hypothesis $\mathcal H_1^k$ is composite, since it depends on  $v_1, \bd$, which are unknown.

Let  $\mg(k, v_1, \bd)$ denote the likelihood ratio of the first $k$ observations, $X_1,\ldots, X_k$, for fixed $v_1,\bd$, i.e.,
\begin{flalign}\label{eq:gamma}
\mg(k, v_1, \bd) = \frac{\mP_{v_1}^\bd (X_1,\ldots,X_k)}{\mP_\infty(X_1,\ldots,X_k)} .
\end{flalign}
When $v_i\leq k< v_{i+1}$ for some $1\leq i\leq L$,
\begin{flalign}
\mg(k,v_1,\bd)=\ml_i[v_i,k]\cdot\prod_{j=1}^{i-1}\ml_j[v_j,v_{j+1}).
\end{flalign}
For the special case with $L=2$,
\begin{flalign}
&\mg(k, v_1, d_1) \nn\\
&=
\begin{cases}
\ml_{1}[v_1,k], &\text{ if }  v_1+d_1 > k  ,\\
\ml_{1}[v_1,v_1+d_1)
\ml_{2}[v_1+d_1,k],
&\text{ if } v_1+ d_1 \leq  k.
\end{cases}
\end{flalign}
We note that $\nu_1$ can be equal to $\nu_2$, i.e., $d_1$ can be equal to 0. This implies that for a given pair of $(v_1,k)$, $k\geq\nu_1$, depending on the time that the second change takes place, there are $k-v_1+2$ possible values of $\mg(k,v_1,d_1)$. 
In general, when $L\geq 2$, for a given pair of $(k,v_1)$, $k\geq \nu_1$, depending on the time that the second to the $L$-th changes take place, there are also finitely many possible values of $\mg(k,v_1,\bd)$.

%


\subsection{D-CuSum}

The D-CuSum \cite{Rovatsos2:2016} detection statistic at time $k$ is  the generalized log-likelihood ratio with respect to both $v_1$ and $\bd$, for the above hypothesis testing problem:
\begin{flalign}\label{eq:dcusum_test}
\widehat W[k]&=\max_{1\leq v_1\leq k}\max_{\bd\in \mathbb N^{L-1}}\log   \mg(k, v_1, \bd) .
\end{flalign}
 
As we explained above, there are finitely many subhypotheses under $\mathcal H_1^k$, which implies that $\mg(k, v_1, \bd)$ has finitely many values, and the maximization in \eqref{eq:dcusum_test} is over finitely many terms. More specifically, equation \eqref{eq:dcusum_test} is equivalent to the one which takes maximization over
\begin{flalign}
\{(v_1,\ldots,v_L):1\leq v_1\leq k, v_1\leq v_2\leq \cdots\leq v_L\leq k+1\},
\end{flalign}
in which, each tuple of $(v_1,\ldots,v_L)$ corresponds to a distinct value of $\mg(k, v_1, \bd)$. 


The corresponding stopping time is given by comparing $\widehat W[k]$ against a pre-determined positive threshold:
\begin{flalign}\label{eq:dcusum}
\widehat \tau(b)=\inf\{k\geq1: \widehat W[k]>b\}.
\end{flalign}
Since $b>0$, without loss of generality we can adopt the positive part of $\widehat W[k]$ as the detection statistic.
It can be shown that
\begin{flalign}
&(\widehat W[k])^+\nn\\
&=\max_{1\leq v_1\leq \cdots\leq v_L\leq k+1} \log\frac{\prod_{i=1}^{L}\left(\prod_{j=v_i}^{\min\{v_{i+1}-1,k\}}f_i(X_j)\right)}{\prod_{j=v_1}^{k}f_0(X_j)}\nn\\
&=\max_{1\leq v_1\leq \cdots\leq v_L\leq k+1}\hspace{-0.05\linewidth}\sum_{j=v_1}^{\min\{v_2-1,k\}}Z_1(X_j)  +\cdots + \sum_{j=v_L}^{k}Z_L(X_j)     .
\end{flalign}

It is shown in \cite[Appendix]{Rovatsos2:2016} that $(\widehat W[k])^+$ has a recursive structure:
\begin{flalign}
(\widehat W[k])^+=\max\left\{ \widehat \mo^{(1)}[k],\widehat \mo^{(2)}[k],\ldots,\widehat \mo^{(L)}[k],0\right\},
\end{flalign}
where for $1\leq i\leq L$, we set $\widehat \mo^{(i)}[0]=0$ and
\begin{flalign}\label{eq:recursion_dcusum}
\widehat \mo^{(i)}[k]=\max\left\{  0,\widehat \mo^{(1)}[k-1],\ldots,\widehat \mo^{(i)}[k-1]   \right\}+Z_i(X_k).
\end{flalign}
\begin{remark}\label{remark1}
	The $L$-dimensional random  vector  $\{ \widehat \mo^{(1)}[k],\ldots,\widehat \mo^{(L)}[k]\}$  depends on $X_1,\ldots,X_{k-1}$ only through $\{\widehat \mo^{(1)}[k-1],\ldots,\widehat \mo^{(L)}[k-1]\}$; thus, it is a  Markov process.
		When all of its components are simultaneously non-positive, or equivalently when $(\widehat W[k])^+$ equals $0$ at some $k$, then $(\widehat W[k])^+$ forgets all previous observations and restarts from zero, i.e., it regenerates.
	
\end{remark}

\subsection{WD-CuSum}
If we take a mixture approach with respect to $\bd$, combined with  a maximum likelihood approach with respect to $v_1$, this suggests the following stopping rule:
\begin{flalign}
\tau'(b)=\inf\{k\geq 1: W'[k]\geq b\},
\end{flalign}
where $b$ is a positive threshold and the detection statistic is
\begin{flalign}\label{eq:38}
W'[k]=\max_{1\leq v_1\leq k}\log\left(\sum_{\bd\in \mathbb N^{L-1}}\mg(k,v_1,\bd)g(\bd)\right),
\end{flalign}
and $g$ is a pmf on $\mathbb N^{L-1}$. Recall that for a given pair of $(k,\nu_1)$, $k\geq \nu_1$, there are finitely many possible values of $\mg(k,v_1,\bd)$. Therefore, this mixture in \eqref{eq:38} is equivalent to a sum over finitely many terms. Here, we denote the total number of all possible values of $\mg(k,v_1,\bd)$ by $n(k,\nu_1)$, and denote each distinct value of $\mg(k,v_1,\bd)$ by $\lambda(k,v_1,j), 1\leq j\leq n(k,\nu_1)$. Then, 
%
\begin{flalign}
W'[k]=\max_{1\leq v_1\leq k}\log\left(\sum_{j=1}^{n(k,v_1)}\lambda(k,v_1,j)g_j\right),
\end{flalign}
where 
\begin{flalign}
g_j=\sum_{\bd:\mg(k,v_1,\bd)=\lambda(k,v_1,j)}g(\bd).
\end{flalign}
Replacing the sum with a maximum, we obtain 
\begin{flalign}
\widetilde W[k]=\max_{1\leq v_1\leq k}\hspace{0.1cm}\max_{1\leq j\leq n(k,v_1)}\log\big(\lambda(k,v_1,j)g_j\big),
\end{flalign}
which leads to the following stopping rule:
\begin{flalign}\label{eq:wdcusum}
\widetilde \tau (b)=\inf\{k\geq 1: \widetilde W[k]\geq b\}.
\end{flalign}
We refer to this stopping rule in \eqref{eq:wdcusum} as the WD-CuSum algorithm.
We note that the reason we replace the sum with a maximum is that with a particular choice of $g$, the resulting algorithm has a recursive structure, which can be updated efficiently.

In the following, we focus on $\widetilde{\tau}$ for a particular choice of $g$, which yields a recursive structure for $\widetilde{W}$. In particular, if we choose
\begin{flalign}
g(\bd)=\prod_{i=1}^{L-1}\rho_i(1-\rho_i)^{d_i},
\end{flalign}
for some $\rho_i\in(0,1)$, $1\leq i\leq L-1$, and consider the positive part of $\widetilde{W}[k]$ (since $b>0$), then
\begin{flalign}\label{eq:wdcusum_test}
(\widetilde W[k])^+=\max_{1\leq v_1\leq \cdots\leq v_L\leq k+1}\log\left(\frac{    \prod_{i=1}^LB_i }{\prod_{j=v_1}^{k}f_0(X_j)}\right),
\end{flalign}
where for $i=1,\ldots,L$,
\begin{flalign}
B_i{=}\left(\prod_{j=v_i}^{\min\{v_{i+1}-1,k\}}f_i(X_j)(1-\rho_{i}) \right) \rho_i^{\mathds{1}_{\{k\geq v_{i+1}\}}},
\end{flalign}
with $v_{L+1}=\infty$ and $\rho_L=0$.

Following steps similar to those in \cite[Appendix]{Rovatsos2:2016}, it can be shown that
\begin{flalign}\label{eq:recursion_wdcusum}
(\widetilde W[k])^+=\max\left\{  \widetilde \mo^{(1)}[k],\ldots,\widetilde \mo^{(L)}[k] ,0 \right\},
\end{flalign}
where  for     $1\leq i\leq L$,
\begin{flalign}\label{eq:37}
\widetilde \mo^{(i)}[k]=&\max_{0\leq j\leq i} \bigg(\widetilde \mo^{(j)} [k-1]+\sum_{\ell=j}^{i-1}\log\rho_{\ell}\bigg)  \nn\\
&+ Z_i(X_k) +\log(1-\rho_i), 
\end{flalign}
with $\widetilde \mo^{(0)}[k]=0$, for all $k$, and $\rho_0=1$.

\begin{example}		
	When  $L=2$, setting  $G(x)=\sum_{k>x}g(k)$, we have 
			\begin{flalign}\label{eq:long1}
		W'[k]&=\max_{1\leq v_1\leq k} \log\Bigg\{
		\sum_{d_1=0}^{k-\nu_1}  g(d_1) \ml_1[v_1,v_2)  \ml_2[v_2,k] \nn\\
		&\hspace{0.4\linewidth}+  G(k-\nu_1)  \ml_1[v_1,k]\Bigg\} ,\nn\\
		\widetilde W[k]&=\max_{1\leq v_1\leq k} \log\Bigg\{\max\bigg\{
		\max_{ 0 \leq d_1 \leq k-\nu_1}  g(d_1) \ml_1[v_1,v_2) \nn\\
		&\hspace{0.15\linewidth}\times \ml_2[v_2,k],   G(k-\nu_1)  \ml_1[v_1,k] \bigg\}\Bigg\}.
		\end{flalign}

\end{example}

\section{Lower Bounds on the  ARL}\label{sec:lowerarl}
In this section, we obtain non-asymptotic lower bounds on the ARL to false alarm for the D-CuSum algorithm and the WD-CuSum algorithm.

\subsection{D-CuSum}
It is interesting to point out that unlike the classical CuSum statistic, which we recover by setting $L=1$, $\{ \widehat \mo^{(1)}[k],\ldots,\widehat \mo^{(L)}[k]\}$ does not always regenerate under $\mP_{\infty}$ for $L\geq 2$. Denote by $Y$ the first regeneration time, i.e.,
\begin{flalign}\label{Y}
Y=\inf\{k \geq1: (\widehat{W}[k])^+ =  0\}.
\end{flalign}
The following example shows that $Y$ is not always finite.
\begin{example} \label{counter}
	Suppose that  $L=2$ and $f_0, f_1,f_2$ are chosen such that
	\begin{flalign}
	&f_0(x)=0.5\times\mathds 1_{\{x\in[0,2]\}}, \nn\\
	&f_1(x)=0.8\times \mathds 1_{\{x\in[0,1]\}} +0.2\times \mathds 1_{\{x\in(1,2]\}}, \nn\\
	& f_2(x)=0.2\times \mathds 1_{\{x\in[0,1]\}} +0.8\times \mathds 1_{\{x\in(1,2]\}}.
	\end{flalign}
	Then, 
	$
	\max\left\{ Z_1(x),Z_2(x)  \right\} > 0,
	$ 
	$\forall x\in[0,2]$, which implies that for all $ k\geq 1$, we have pathwise
	\begin{flalign}
	(\widehat{W}[k])^+=\max\left\{ \widehat{\mo}^{(1)}[k], \widehat{\mo}^{(2)}[k],0\right\}>0.
	\end{flalign}
\end{example}
If we assume that the pre- and post-change distributions satisfy the following condition:
\begin{flalign}\label{eq:conditions}
\mP_\infty(Y>m)\leq e^{-\alpha m}, \quad \forall   m\geq 1,
\end{flalign}
where $\alpha>0$ is a constant, then $Y$ is finite with probability one. In other words, the probability that $(\widehat{W}[k])^+$ regenerates within finite time is one.  Moreover, the expectation of the regeneration time $Y$ is upper bounded by a finite constant:
\begin{flalign}
\mE_\infty[Y]\leq 1+\sum_{m=1}^\infty e^{-\alpha m}\leq 1+\frac{1}{\alpha} < \infty.
\end{flalign}
Therefore, $(\widehat W[k])^+$ is regenerative, and the ARL of the D-CuSum algorithm is lower bounded as in the following proposition. See Appendix \ref{remark2} for an example of sufficient conditions for \eqref{eq:conditions} to hold.

\begin{proposition}\label{prop:lowerarldcusum}
	Consider the QCD problem under transient dynamics described in Section \ref{sec:model}.
	Assume that the pre- and post-change distributions satisfy condition \eqref{eq:conditions}. If the D-CuSum algorithm is applied with a threshold $b$, then the ARL is lower bounded as follows:
	\begin{flalign}
	\mE_{\infty}[\widehat \tau(b)]\geq \frac{e^b}{1+\left( {b/\alpha} \right)^{L+1}}.
	\end{flalign}
\end{proposition}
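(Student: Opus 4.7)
The plan is to exploit the regenerative structure of the D-CuSum statistic under $\mP_\infty$ guaranteed by \eqref{eq:conditions}, and to combine it with a union-bound estimate of the false-alarm probability within a single regeneration cycle.

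First I would invoke Remark \ref{remark1}: the vector $(\widehat\mo^{(1)}[k],\ldots,\widehat\mo^{(L)}[k])$ is a Markov chain, and every visit to the non-positive orthant --- equivalently, every zero of $(\widehat W[k])^+$ --- is a regeneration of the whole statistic. Under \eqref{eq:conditions} the first regeneration time $Y$ in \eqref{Y} is $\mP_\infty$-a.s.\ finite with $\mE_\infty[Y]\leq 1+1/\alpha$, so successive cycles can legitimately be treated as i.i.d.\ copies. Setting $V=\min(\widehat\tau(b),Y)$ and $p=\mP_\infty(\widehat\tau(b)<Y)$, the strong Markov property at $Y$ (so that after a cycle without alarm the chain restarts independently) together with Wald's identity applied to the cycle-time sequence with the geometric$(p)$ stopping index $N=\inf\{j:\text{alarm in cycle }j\}$ yields
\begin{flalign*}
\mE_\infty[\widehat\tau(b)] \;=\; \mE_\infty[N]\,\mE_\infty[V] \;=\; \mE_\infty[V]/p \;\geq\; 1/p,
\end{flalign*}
since $V\geq 1$ deterministically. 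Hence it suffices to show $p\leq (1+(b/\alpha)^{L+1})e^{-b}$.

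For this upper bound I would use the elementary inclusion $\{\widehat\tau(b)<Y\}\subseteq\{Y>M\}\cup\{\widehat\tau(b)\leq M\}$, valid for every positive integer $M$, and bound each piece separately:
\begin{flalign*}
p \;\leq\; e^{-\alpha M} + \mP_\infty\!\left(\max_{1\leq k\leq M}\widehat W[k]>b\right),
\end{flalign*}
where the first term uses \eqref{eq:conditions} directly. The second term is handled by a union bound over the finite family of tuples $(k,v_1,\ldots,v_L)$ with $1\leq k\leq M$ and $1\leq v_1\leq\cdots\leq v_L\leq k+1$: since $\mg(k,v_1,\bd)$ is a likelihood ratio, $\mE_\infty[\mg(k,v_1,\bd)]=1$, and Markov's inequality gives $\mP_\infty(\log\mg(k,v_1,\bd)>b)\leq e^{-b}$. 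The number of such tuples is $\sum_{k\leq M}\binom{k+L}{L}=\binom{M+L+1}{L+1}$, a polynomial in $M$ of degree $L+1$, which is at most $(b/\alpha)^{L+1}$ with the choice $M=\lceil b/\alpha\rceil$; simultaneously that choice forces $e^{-\alpha M}\leq e^{-b}$, so the two contributions combine to $p\leq(1+(b/\alpha)^{L+1})e^{-b}$ and the claim follows.

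The main obstacle is the Wald/renewal identity $\mE_\infty[\widehat\tau(b)]=\mE_\infty[V]/p$: it must be justified rigorously via the strong Markov property for the $L$-dimensional chain of Remark \ref{remark1} at the stopping time $Y$, ensuring that the post-$Y$ process is an independent copy of the original and that Wald's identity is applicable to the geometric number of cycles preceding the alarm (integrability of $N$ and $V$ must be checked, both of which follow from the exponential tail in \eqref{eq:conditions}). A secondary technicality is tightening the combinatorial count $\binom{M+L+1}{L+1}$ so that it is actually dominated by $(b/\alpha)^{L+1}$ rather than producing extra factorial factors; this is precisely why the truncation level $M=\lceil b/\alpha\rceil$, which equates the exponents $\alpha M$ and $b$, is the right choice.
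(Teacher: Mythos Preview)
Your approach is essentially the same as the paper's: both exploit the regenerative structure to reduce to bounding $p=\mP_\infty(\widehat\tau(b)<Y)$, then split $p$ at a truncation level $m=b/\alpha$, invoking \eqref{eq:conditions} for $\mP_\infty(Y>m)$ and a union bound with Markov's inequality (using $\mE_\infty[\mg(k,v_1,\bd)]=1$) for $\mP_\infty(\widehat\tau(b)\leq m)$. The only cosmetic difference is that the paper bypasses Wald's identity via the cruder inequality $\widehat\tau(b)\geq N$ and then bounds $\mE_\infty[N]=\sum_{n\geq 0}\mP_\infty(N\geq n)\geq\sum_{n\geq 0}(1-p)^n=1/p$ directly, and it uses the loose count $m^{L+1}$ for the tuples rather than your exact $\binom{M+L+1}{L+1}$.
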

\begin{proof}
	See Appendix \ref{proof:proposition1}.
\end{proof}
\begin{corollary}
	Assume that the pre- and post-change distributions satisfy condition \eqref{eq:conditions}.
	To guarantee $\mE_{\infty}[\widehat \tau(b)]\geq\gamma$, it suffices to choose $b$
	such that 
	{
	$$
	\frac{e^{b}}{\left(b/\alpha\right)^{L+1}+1} = \gamma,
	$$
	}
	and 	$
	b\sim\log\gamma$.
\end{corollary}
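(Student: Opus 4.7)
The first assertion is immediate from Proposition~\ref{prop:lowerarldcusum}: since the proposition gives $\mE_{\infty}[\widehat\tau(b)]\geq e^b/(1+(b/\alpha)^{L+1})$, any choice of $b$ making the right-hand side equal to $\gamma$ automatically places $\widehat\tau(b)$ in $\mathcal C_\gamma$. So the only real content is the second assertion, namely the asymptotic equivalence $b\sim\log\gamma$ as $\gamma\to\infty$, where $b=b(\gamma)$ is the unique solution of
\begin{flalign}
e^{b}=\gamma\,\bigl((b/\alpha)^{L+1}+1\bigr). \label{eq:implicit}
\end{flalign}

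My plan is to analyze \eqref{eq:implicit} by taking logarithms and iterating. Taking logs gives
\begin{flalign}
b=\log\gamma+\log\!\bigl((b/\alpha)^{L+1}+1\bigr), \label{eq:implicit2}
\end{flalign}
which already shows $b\geq\log\gamma$, hence $b\to\infty$ as $\gamma\to\infty$. In particular, for all sufficiently large $\gamma$ we have $b\geq 1$, and then the right-hand side of \eqref{eq:implicit2} satisfies $\log((b/\alpha)^{L+1}+1)\leq (L+1)\log(b/\alpha)+\log 2$, which is $O(\log b)$. Dividing \eqref{eq:implicit2} by $\log\gamma$ then yields
\begin{flalign}
\frac{b}{\log\gamma}=1+O\!\left(\frac{\log b}{\log\gamma}\right).
\end{flalign}
The final step is to show $\log b/\log\gamma\to 0$; this follows by a single iteration, since $b\leq \log\gamma+(L+1)\log(b/\alpha)+\log 2$ implies $b\leq 2\log\gamma$ for all large $\gamma$ (any $b$ growing faster than $\log\gamma$ would contradict this bound), and therefore $\log b\leq \log(2\log\gamma)=o(\log\gamma)$. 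Plugging back gives $b/\log\gamma\to 1$, i.e.\ $b\sim\log\gamma$.

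There is no serious obstacle here; the only thing to be careful about is the implicit nature of the defining equation, which is handled by the two-step bootstrap above (first upper-bound $b$ crudely by $2\log\gamma$ using \eqref{eq:implicit2}, then plug this bound back in to obtain the sharp equivalence). No additional hypotheses beyond condition \eqref{eq:conditions} are needed, since that condition is already absorbed into Proposition~\ref{prop:lowerarldcusum}.
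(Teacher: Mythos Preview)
Your proposal is correct and follows the same approach as the paper, which simply states ``The result follows from Proposition~\ref{prop:lowerarldcusum}.'' You have supplied the routine asymptotic calculation for $b\sim\log\gamma$ that the paper leaves implicit; your two-step bootstrap (crude bound $b\leq 2\log\gamma$, then plug back in) is a standard and correct way to handle this implicit equation.
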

\begin{proof}
	The result follows from Proposition \ref{prop:lowerarldcusum}.
\end{proof}

\subsection{WD-CuSum}

\begin{theorem}\label{thm:lowerboundarl}
	Consider the QCD problem under transient dynamics described in Section \ref{sec:model}. Assume that the WD-CuSum algorithm in \eqref{eq:wdcusum} is applied with threshold $b$ and any $\rho_i\in(0,1)$, $1\leq i\leq L-1$. Then, the ARL of  the WD-CuSum algorithm  is lower bounded as follows:
	\begin{flalign}
	\mE_\infty[\widetilde\tau(b)]\geq \frac{1}{2}e^b.
	\end{flalign}
\end{theorem}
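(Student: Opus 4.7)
The plan is to construct a non-negative process that dominates $e^{\widetilde W[k]}$ and enjoys a simple martingale structure under $\mP_\infty$, and then apply optional stopping. Specifically, with $g(\bd)=\prod_{i=1}^{L-1}\rho_i(1-\rho_i)^{d_i}$, introduce the auxiliary statistic
\begin{equation*}
N_k \;=\; \sum_{v_1=1}^{k}\; \sum_{\bd\in\mathbb N^{L-1}} \mg(k,v_1,\bd)\, g(\bd),
\end{equation*}
which is a sum of non-negative terms and which, by construction, upper bounds $e^{\widetilde W[k]}$. Concretely, since each $g_j$ collects the weights $g(\bd)$ over those $\bd$ yielding the same likelihood value $\lambda(k,v_1,j)$, one has $e^{\widetilde W[k]}=\max_{v_1,j}\lambda(k,v_1,j)g_j \leq \max_{v_1}\sum_j \lambda(k,v_1,j)g_j = \max_{v_1}\sum_\bd \mg(k,v_1,\bd)g(\bd) \leq N_k$. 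In particular, at the stopping time $\widetilde\tau(b)$ we have $N_{\widetilde\tau(b)}\geq e^{\widetilde W[\widetilde\tau(b)]}\geq e^b$.

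The key step is to show that $\{N_k-k\}$ is a zero-mean $\mP_\infty$-martingale (it starts from $N_0=0$). For fixed $\bd$ and any $v_1<k$, the ratio $\mg(k,v_1,\bd)$ factors as $\mg(k-1,v_1,\bd)$ times a single-sample ratio $f_i(X_k)/f_0(X_k)$ for the appropriate phase index $i=i(k;v_1,\bd)$, and under $\mP_\infty$ this last factor has conditional expectation one given $\mathcal F_{k-1}$. For the boundary case $v_1=k$, the inner sum $\sum_{\bd}\mg(k,k,\bd)g(\bd)$ depends on $\mathcal F_{k-1}$ only through $X_k$, which is $\mathcal F_{k-1}$-independent and distributed as $f_0$ under $\mP_\infty$; combined with $\sum_{\bd}g(\bd)=1$ this yields $\mE_\infty\!\left[\sum_{\bd}\mg(k,k,\bd)g(\bd)\mid \mathcal F_{k-1}\right]=1$. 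Summing the two contributions over $v_1$ gives the identity $\mE_\infty[N_k\mid \mathcal F_{k-1}]=N_{k-1}+1$, so $\mE_\infty[N_k]=k$ and the martingale is integrable.

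With the martingale in hand, apply optional stopping at the bounded time $\widetilde\tau(b)\wedge n$ to obtain $\mE_\infty[\widetilde\tau(b)\wedge n]=\mE_\infty[N_{\widetilde\tau(b)\wedge n}]\geq \mE_\infty\!\left[N_{\widetilde\tau(b)}\mathds 1_{\{\widetilde\tau(b)\leq n\}}\right]\geq e^b\,\mP_\infty(\widetilde\tau(b)\leq n)$, where the first inequality uses $N_k\geq 0$ on the complementary event and the second uses $N_{\widetilde\tau(b)}\geq e^b$. Letting $n\to\infty$ (monotone convergence on the left) delivers $\mE_\infty[\widetilde\tau(b)]\geq e^b\,\mP_\infty(\widetilde\tau(b)<\infty)$. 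A one-line case split closes the gap to $e^b/2$: if $\mP_\infty(\widetilde\tau(b)<\infty)\geq 1/2$ the bound is immediate, while in the complementary case $\mP_\infty(\widetilde\tau(b)=\infty)>1/2$ forces $\mE_\infty[\widetilde\tau(b)]=\infty$.

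The main obstacle is the careful bookkeeping in the martingale calculation, since the phase index $i(k;v_1,\bd)$ depends on $\bd$ in a piecewise manner and the boundary case $v_1=k$ must be handled separately from the bulk $v_1<k$; once that identity is verified, the domination $e^{\widetilde W[k]}\leq N_k$ and the optional-stopping finish are essentially routine, and no assumption on the $\rho_i$'s beyond $\rho_i\in(0,1)$ is needed, which is why the bound is universal.
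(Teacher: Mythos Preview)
Your proof is correct and follows essentially the same route as the paper: the dominating statistic $N_k$ you construct is exactly the paper's mixture Shiryaev--Roberts statistic $R[k]$, and the key identity $\mE_\infty[N_k\mid\mathcal F_{k-1}]=N_{k-1}+1$ is the martingale property the paper invokes. The only difference is in the endgame: the paper applies Doob's submartingale inequality to obtain $\mP_\infty(\widetilde\tau(b)\leq k)\leq ke^{-b}$ and then sums $(1-ke^{-b})^+$, whereas you use optional stopping plus a case split---in fact the paper's Remark after the theorem points out that the optional-sampling route you take can yield the sharper constant $e^b$.
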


\begin{proof}
	See Appendix \ref{proof:theorem1}.
\end{proof}
\begin{remark}
	The lower bound can be further tightened to $e^b$ by using
	Doob's optional sampling theorem \cite{robbins1971great}  instead of the submartingale inequality. However, this does not provide order-level improvement. 
\end{remark}
\begin{corollary}
	To guarantee $\mE_{\infty}[\widetilde \tau(b)] \geq\gamma$, it suffices to choose
	\begin{flalign}
	b=\log\gamma+\log 2\sim\log\gamma.
	\end{flalign}
\end{corollary}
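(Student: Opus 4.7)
The plan is to dominate the exponentiated statistic $e^{\widetilde W[k]}$ pathwise by a nonnegative $\mP_\infty$-submartingale whose mean grows linearly in $k$, and then to apply Doob's maximal inequality. The natural candidate is the ``mixture-and-sum'' statistic
\begin{flalign}
U_k \;:=\; \sum_{v_1=1}^{k}\sum_{\bd\in\mathbb N^{L-1}} g(\bd)\,\mg(k,v_1,\bd),
\end{flalign}
obtained by replacing the two maxima in the definition of $\widetilde W[k]$ with sums of nonnegative terms. Using the partition of $\bd$ into classes on which $\mg(k,v_1,\bd)$ is constant that is used to pass from \eqref{eq:38} to \eqref{eq:wdcusum_test}, each $\lambda(k,v_1,j)g_j$ inside the max defining $e^{\widetilde W[k]}$ equals $\sum_{\bd\in C_j}g(\bd)\mg(k,v_1,\bd)$; by nonnegativity the max is dominated by the full double sum, yielding $e^{\widetilde W[k]}\leq U_k$ pathwise.

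Next, I will show that $\{U_k-k\}_{k\geq 0}$ is a mean-zero $(\mathcal F_k)$-martingale under $\mP_\infty$. With the convention $\mg(k,v_1,\bd)=1$ for $k<v_1$, each $\{\mg(k,v_1,\bd)\}_{k\geq 0}$ is a nonnegative $\mP_\infty$-martingale with mean $1$, being identically $1$ for $k<v_1$ and a standard likelihood ratio thereafter. Nonnegativity together with $\sum_\bd g(\bd)=1$ justifies Tonelli, so $R_k^{(v_1)}:=\sum_\bd g(\bd)\mg(k,v_1,\bd)$ is also a $\mP_\infty$-martingale with mean $1$. Decomposing $U_k=R_k^{(k)}+\sum_{v_1=1}^{k-1}R_k^{(v_1)}$ and noting that $R_k^{(k)}$ depends only on $X_k$ and is therefore $\mP_\infty$-independent of $\mathcal F_{k-1}$ with $\mE_\infty[R_k^{(k)}]=1$, one obtains
\begin{flalign}
\mE_\infty\!\left[U_k\mid\mathcal F_{k-1}\right]\;=\;1+\sum_{v_1=1}^{k-1}R_{k-1}^{(v_1)}\;=\;U_{k-1}+1,
\end{flalign}
so $U_k$ is a nonnegative submartingale with $\mE_\infty[U_k]=k$.

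The bound then follows quickly. The inclusion $\{\widetilde\tau(b)\leq n\}\subseteq\{\max_{1\leq k\leq n}U_k\geq e^b\}$ combined with Doob's submartingale inequality gives $\mP_\infty(\widetilde\tau(b)\leq n)\leq n/e^b$ for every $n\geq 1$, and plugging this into the tail-sum formula yields
\begin{flalign}
\mE_\infty[\widetilde\tau(b)]\;=\;\sum_{n\geq 0}\mP_\infty(\widetilde\tau(b)>n)\;\geq\;\sum_{n=0}^{\lfloor e^b\rfloor}\!\big(1-n/e^b\big)\;=\;(\lfloor e^b\rfloor+1)\Big(1-\tfrac{\lfloor e^b\rfloor}{2e^b}\Big)\;\geq\;\tfrac{1}{2}e^b,
\end{flalign}
where the last inequality uses $\lfloor e^b\rfloor+1\geq e^b$ and $\lfloor e^b\rfloor/(2e^b)\leq 1/2$.

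The step I expect to require the most care is the martingale identity for $U_k-k$: because the outer summation range $v_1\leq k$ grows with $k$, the freshly added summand $R_k^{(k)}$ must contribute exactly $1$ to the conditional mean at every step, and it is precisely this cancellation that pins $\mE_\infty[U_k]=k$ and ultimately produces the factor $1/2$. The accompanying interchange of $\mE_\infty$ with the infinite sum over $\bd$ is harmless by nonnegativity. Consistent with the subsequent remark, upgrading Doob's submartingale inequality to the optional sampling theorem applied to the martingale $U_k-k$ at the bounded stopping time $\widetilde\tau(b)\wedge n$, and then letting $n\to\infty$, sharpens the constant from $1/2$ to $1$.
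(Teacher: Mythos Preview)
Your proposal is correct and follows essentially the same route as the paper. The corollary in the paper is deduced in one line from Theorem~\ref{thm:lowerboundarl}, whose proof (Appendix~\ref{proof:theorem1}) introduces exactly your statistic $U_k$ (there denoted $R[k]$), observes that $\{R[k]-k\}$ is a $\mP_\infty$-martingale via the mixture Shiryaev--Roberts structure, applies Doob's submartingale inequality to get $\mP_\infty(\widetilde\tau(b)\leq k)\leq k e^{-b}$, and finishes with the same tail-sum computation; your write-up simply supplies more detail on the martingale verification than the paper, which cites \cite{pollak1987average}.
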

\begin{proof}
	The result follows from Theorem \ref{thm:lowerboundarl}.
\end{proof}

\section{Asymptotic Analysis}\label{sec:asymptotic}
In this section  we study the asymptotic performance of the proposed algorithms and demonstrate their asymptotic optimality.  For our asymptotic analysis to be non-trivial, we let not only the prescribed lower bound on the ARL, $\gamma$,   go to infinity,  but also the transient durations. Indeed, if  the latter are  fixed as $\gamma$ goes to infinity, then
the CuSum algorithm that detects the change from $f_0$ to $f_L$, completely ignoring the  transient phases, can be shown to be
asymptotically optimal using the techniques in \cite{lai1998information}. Therefore, in order to perform a general and relevant  asymptotic analysis, we let $d_1, \ldots, d_{L-1}$ go to infinity with $\gamma$.
Without loss of generality, we assume that  there are constants $c_i \in[0,\infty]$,  $i=1,\ldots,L-1$ so that 
\begin{flalign} \label{scaling}
d_i\sim c_i  \frac{\log\gamma}{I_i},
\end{flalign}
where  $c_L=\infty$.  Note that  if  $d_i=o (\log\gamma)$, then $c_i=0$, whereas if $\log\gamma=o(d_i)$, then  $c_i=\infty$. We stress that $\{c_i, i=1,\ldots,L-1\}$ are unknown, since the transient durations are unknown, and are not utilized in the design of the proposed rules. However the optimal asymptotic performance will turn out to be a function of these constants.

We start with the case of a single transient phase $(L=2)$, since it captures the essential features of the analysis, and then present the generalization to $L>2$.


\subsection{Asymptotic Universal Lower Bound on the WADD}\label{sec:lower}
Consider the case with $L=2$, for which $\bd=d_1$. As will be shown in the following, the optimal asymptotic performance  depends on whether $c_1 \geq 1$ or $c_1<1$. This dichotomy can be seen in the following asymptotic universal  lower bound on the WADD.

\begin{theorem}\label{thm:lower}
	Consider the QCD problem under transient dynamics described in Section \ref{sec:model} with $L=2$. Suppose that \eqref{scaling} holds, i.e., $d_1\sim c_1\log\gamma/I_1$.
	\begin{enumerate}
		\item[(i)]  If $c_1\geq1$, then as $\gamma \rightarrow \infty$,
		\begin{flalign}\label{eq:11}
			\inf_{\tau\in\mathcal C_\gamma} J^{d_1}_\text{L}(\tau)\nn &\geq \inf_{\tau\in\mathcal C_\gamma}J^{d_1}_\text{P}(\tau) \nn\\
			&\geq \frac{\log \gamma}{I_1}(1-o(1));
		\end{flalign}
		\item[(ii)]  if $c_1<1$, then as $\gamma \rightarrow \infty$,
		\begin{flalign}\label{eq:22}
			\inf_{\tau\in\mathcal C_\gamma} J^{d_1}_\text{L}(\tau)\nn &\geq \inf_{\tau\in\mathcal C_\gamma}J^{d_1}_\text{P}(\tau) \nn\\
			&\geq \log \gamma \left(\frac{1-c_1}{I_2}+\frac{c_1}{I_1}\right)(1-o(1)).
		\end{flalign}
	\end{enumerate}
\end{theorem}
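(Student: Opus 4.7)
The first inequality $J^{d_1}_{\text{L}}(\tau) \geq J^{d_1}_{\text{P}}(\tau)$ is immediate from the definitions, since an essential supremum dominates the corresponding conditional expectation; thus it suffices to lower-bound $\inf_{\tau \in \mathcal{C}_\gamma} J^{d_1}_{\text{P}}(\tau)$. My plan is a standard Lai-type asymptotic lower-bound argument: for each regime, pick an integer $N_\gamma$ slightly below the asserted bound and show that, uniformly over $\tau \in \mathcal{C}_\gamma$, $\mP_{1}^{d_1}(\tau - 1 < N_\gamma \mid \tau \geq 1) \to 0$ as $\gamma \to \infty$. Since $\{\tau \geq 1\}$ has full probability, a one-line Markov bound then gives $J^{d_1}_{\text{P}}(\tau) \geq N_\gamma (1 - o(1))$, and sending the slack $\epsilon > 0$ hidden in $N_\gamma$ to zero yields the claim.

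The core of the argument is the analysis of the post-change log-likelihood process $S_k := \log \mg(k, 1, d_1)$. Under $\mP_1^{d_1}$, $S_k$ is a random walk whose i.i.d.\ increments have mean $I_1$ for $1 \leq k \leq d_1$ and mean $I_2$ for $k > d_1$, so by the strong law of large numbers
\begin{equation*}
S_k = k I_1 + o(k) \text{ for } k \leq d_1, \qquad S_k = d_1 I_1 + (k - d_1) I_2 + o(k) \text{ for } k > d_1.
\end{equation*}
In case (i) ($c_1 \geq 1$), I take $N_\gamma := \lfloor (1-\epsilon)\log\gamma / I_1 \rfloor$; since $d_1 \sim c_1 \log\gamma / I_1 \geq \log\gamma/I_1 > N_\gamma$ for $\gamma$ large, all of $X_1, \dots, X_{N_\gamma}$ fall inside the transient phase and $\max_{1 \leq k \leq N_\gamma} S_k \approx (1-\epsilon)\log\gamma$. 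In case (ii) ($c_1 < 1$), I take $N_\gamma := \lfloor (1-\epsilon)\log\gamma \bigl((1-c_1)/I_2 + c_1/I_1\bigr) \rfloor$, which exceeds $d_1$ for small $\epsilon$; plugging $d_1 \sim c_1 \log\gamma/I_1$ into the two-phase decomposition yields $\max_{1 \leq k \leq N_\gamma} S_k \approx (1 - \epsilon K)\log\gamma$ with $K = (1 - c_1) + c_1 I_2/I_1 > 0$. In both cases the maximum stays strictly below $\log\gamma$ with $\mP_1^{d_1}$-probability tending to one as $\gamma \to \infty$.

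The final step is a Lai-type change-of-measure estimate: combining the ARL constraint $\mE_\infty[\tau] \geq \gamma$ with a decomposition of $\{\tau \leq N_\gamma\}$ according to whether $\max_{1 \leq k \leq N_\gamma} S_k$ exceeds a threshold $(1-\delta)\log\gamma$ (with $\delta > 0$ chosen smaller than $\epsilon$ in case (i) and smaller than $\epsilon K$ in case (ii)), one derives
\begin{equation*}
\mP_{1}^{d_1}(\tau \leq N_\gamma) \;\leq\; \mP_{1}^{d_1}\Bigl(\max_{1 \leq k \leq N_\gamma} S_k \geq (1-\delta)\log\gamma\Bigr) + \gamma^{-\delta}(1 + o(1)),
\end{equation*}
which vanishes by the previous paragraph. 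The main technical obstacle I anticipate is in case (ii): the log-likelihood process has a drift change at $k = d_1$, so the SLLN estimates must be uniform across both phases when controlling the supremum, and $N_\gamma$ must precisely saturate the combined information budget $d_1 I_1 + (N_\gamma - d_1) I_2 \approx \log\gamma$ so that the resulting lower bound matches (up to a vanishing factor) the upper bound subsequently achieved by the WD-CuSum algorithm.
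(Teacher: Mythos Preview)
Your overall strategy---Markov's inequality to reduce to a probability bound, a change-of-measure step, and the law of large numbers for the two-phase log-likelihood---is exactly the paper's approach, and your Case~(ii) arithmetic for $\max_{k\le N_\gamma} S_k \approx (1-\epsilon K)\log\gamma$ is correct. However, there is a genuine gap in your final change-of-measure step: by fixing $v_1=1$, you need
\[
\mP_1^{d_1}\bigl(\tau\le N_\gamma,\ \max_{k\le N_\gamma} S_k < (1-\delta)\log\gamma\bigr)
\;\le\; e^{(1-\delta)\log\gamma}\,\mP_\infty(\tau\le N_\gamma)
\;=\;\gamma^{1-\delta}\,\mP_\infty(\tau\le N_\gamma)
\]
to be $\gamma^{-\delta}(1+o(1))$, i.e., you need $\mP_\infty(\tau\le N_\gamma)=O(1/\gamma)$. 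But the ARL constraint $\mE_\infty[\tau]\ge\gamma$ alone does \emph{not} imply this: take $\tau=1$ with probability $1/2$ and $\tau=2\gamma-1$ with probability $1/2$; then $\mE_\infty[\tau]=\gamma$ yet $\mP_\infty(\tau\le N_\gamma)=1/2$ for every $N_\gamma\ge 1$.

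The repair, which is precisely what the paper does, is Lai's pigeonhole trick: from $\mE_\infty[\tau]\ge\gamma$ one can show (by contradiction) that for each $m<\gamma$ there exists some $v_1\ge 1$, depending on $\tau$, with $\mP_\infty(\tau\ge v_1)>0$ and
\[
\mP_\infty\bigl(\tau < v_1+m \,\big|\, \tau\ge v_1\bigr)\;\le\;\frac{m}{\gamma}.
\]
One then runs your entire argument at this $v_1$ rather than at $v_1=1$: the log-likelihood process and its two-phase SLLN analysis are unchanged (the post-$v_1$ samples have the same distribution under $\mP_{v_1}^{d_1}$ as the post-$1$ samples under $\mP_1^{d_1}$), and since $J^{d_1}_{\mathrm P}(\tau)$ is a supremum over $v_1$, bounding the conditional delay at this particular $v_1$ still yields the desired lower bound. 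Everything else in your proposal goes through.
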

\begin{proof}
	See Appendix \ref{app:proofloweradd}.
\end{proof}
Theorem \ref{thm:lower} suggests that to meet the asymptotic universal lower bound on the WADD, an algorithm should be adaptive to the unknown $d_1$.

The proof of the asymptotic universal lower bound is based on a change-of-measure argument and the Weak Law of Large Numbers for log-likelihood ratio statistics, similarly to \cite{lai1998information}. However, a major difference is that when changing measures, the post-change statistic is more complicated, due to the cascading of the transient and persistent distributions. In the proof, a decomposition of the sum of the log-likelihood of the samples is necessary before the application of the Weak Law of Large Numbers.

\subsection{Asymptotic Upper Bounds on the WADD}
We now establish asymptotic upper bounds on the WADD of the proposed algorithms for a threshold $b$.  Again, we start with the case case of a single transient phase $(L=2)$, in which the 
WD-CuSum algorithm depends only on the parameter $\rho_1$.  Specifically, 
\begin{flalign}
	(\widetilde{W}[k])^+=\max\{\widetilde \mo^{(1)}[k],\widetilde \mo^{(2)}[k],0\},
\end{flalign}
where
\begin{flalign}\label{eq:51}
\widetilde \mo^{(1)}[k]&=\left(\widetilde \mo^{(1)}[k-1]\right)^+ + Z_1(X_k)+\log(1-\rho_1),\nn\\
\widetilde \mo^{(2)}[k]&=\max\left\{\log\rho_1,\widetilde \mo^{(1)}[k-1]+\log\rho_1, \widetilde \mo^{(2)}[k-1]\right\}\nn\\
&\quad+Z_2(X_k).
\end{flalign}
As we can observe from \eqref{eq:51}, the ``drift'' of the WD-CuSum algorithm for the samples within the transient phase is $I_1+\log(1-\rho_1)$, and there is a negative constant $\log\rho_1$ added to $\widetilde \mo^{(2)}[k]$.
To meet the asymptotic universal lower bound on the WADD, which does not depend on $\rho_1$, we need to mitigate the effect of $\rho_1$ on the performance.
If we choose $\rho_1$ such that  as $b\rightarrow \infty$,
\begin{flalign}\label{eq:rho}
\rho_1\rightarrow 0\text{ and }
\frac{\log \rho_1}{b}\rightarrow 0,
\end{flalign}
e.g., $\rho_1={1}/{b}$, then the ``effective drift" within the transient phase is $I_1(1-o(1))$, and the ``effective threshold" is $b(1+o(1))$, asymptotically. In this way, the effect of the weights  on the upper bound is asymptotically negligible.

Furthermore, suppose that there is a constant $c_1'\in[0,\infty]$, such that
\begin{flalign}\label{eq:db}
d_1\sim c_1'\frac{b}{I_1}.
\end{flalign}
If we choose $b\sim \log\gamma$, then $c_1=c_1'$, where $c_1$ is defined in \eqref{scaling}.

The following theorem characterizes asymptotic upper bounds on the WADD for the WD-CuSum and D-CuSum algorithms.
\begin{theorem}\label{thm:upper}
	Consider the QCD problem under transient dynamics described in Section \ref{sec:model} with $L=2$.
	Suppose that  \eqref{eq:rho} and \eqref{eq:db} hold. Consider the WD-CuSum algorithm in \eqref{eq:wdcusum}, and the D-CuSum algorithm in \eqref{eq:dcusum}. 
	
	(i) If $c_1'> 1$, then as $b\rightarrow\infty$,
	\begin{flalign}\label{eq:1}
	J^{d_1}_\text{L}(\widetilde \tau(b)) &=  J^{d_1}_\text{P}(\widetilde \tau(b)) \leq \frac{b}{I_1}(1+o(1)),\\
	J^{d_1}_\text{L}(\widehat \tau(b)) &=  J^{d_1}_\text{P}(\widehat \tau(b)) \leq \frac{b}{I_1}(1+o(1));
	\end{flalign}
	
	(ii) if $c_1'\leq 1$, then as $b\rightarrow\infty$,
	\begin{flalign}\label{eq:2}
	J^{d_1}_\text{L}(\widetilde \tau(b)) &=  J^{d_1}_\text{P}(\widetilde \tau(b))\leq b\left(\frac{c_1'}{I_1}+ \frac{1-c_1'}{I_2}\right)(1+o(1)),\\
		J^{d_1}_\text{L}(\widehat \tau(b)) &=  J^{d_1}_\text{P}(\widehat \tau(b))\leq b\left(\frac{c_1'}{I_1}+\frac{1-c_1'}{I_2}\right)(1+o(1)).
	\end{flalign}
\end{theorem}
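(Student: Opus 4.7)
The plan is to upper bound both $J^{d_1}_\text{L}(\tau(b))$ and $J^{d_1}_\text{P}(\tau(b))$ for $\tau \in \{\widetilde \tau, \widehat \tau\}$ by $\mE^{d_1}_1[\tau(b)]$, and then to control the latter by tracing the detection statistic along the true sub-hypothesis and applying the strong law of large numbers together with a Cram\'er--Chernoff tail. For the reduction, both $\widehat W[k]$ and $\widetilde W[k]$ are positive parts of maxima over $v_1 \leq k$ whose components admit the recursions \eqref{eq:recursion_dcusum} and \eqref{eq:37}, each of which restarts (or at worst refreshes to a value no smaller than a fresh start) whenever the positive part reaches zero, cf.\ Remark~\ref{remark1}. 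A sample-path coupling then yields, for any realization of $X_1,\ldots,X_{v_1-1}$,
\[
\mE^{d_1}_{v_1}\!\bigl[\tau(b) - v_1 \,\bigm|\, \tau(b) \geq v_1,\, X_1, \ldots, X_{v_1-1}\bigr] \;\leq\; \mE^{d_1}_1[\tau(b)],
\]
so that $J^{d_1}_\text{L}(\tau(b)) \leq \mE^{d_1}_1[\tau(b)]$; combined with the lower bound in Theorem~\ref{thm:lower}, this also yields the asymptotic equality $J^{d_1}_\text{L} = J^{d_1}_\text{P}$ claimed in the statement.

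Next, I lower bound the detection statistic along the true trajectory. Specializing the maximum in \eqref{eq:wdcusum_test} to $v_1 = 1$, $v_2 = d_1 + 1$ gives, under $\mP^{d_1}_1$,
\[
\widetilde W[k] \;\geq\; \sum_{j=1}^{k \wedge d_1} Z_1(X_j) + (k \wedge d_1)\log(1-\rho_1) + \mathds{1}_{\{k > d_1\}}\!\left(\log \rho_1 + \sum_{j=d_1+1}^{k} Z_2(X_j)\right),
\]
with the corresponding bound on $\widehat W[k]$ obtained by dropping the $\log(1-\rho_1)$ and $\log \rho_1$ terms. Under \eqref{eq:rho} and in the regimes with $c_1' < \infty$ (so that $d_1 = O(b)$), we have $d_1 \log(1-\rho_1) = -d_1 \rho_1(1+o(1)) = o(b)$ and $\log \rho_1 = o(b)$, so both $\rho_1$-induced terms are negligible relative to $b$.

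Let $k^\star(b)$ denote the smallest $k$ at which the expectation of the above right-hand side reaches $b$. In case (i), $c_1' > 1$, so $d_1 I_1 > b$ asymptotically and $k^\star(b) = (b/I_1)(1+o(1)) < d_1$, with detection occurring inside the transient phase; in case (ii), $c_1' \leq 1$, so detection needs at least $\lceil (b - d_1 I_1)/I_2 \rceil$ persistent samples, yielding $k^\star(b) = d_1 + \bigl((b - d_1 I_1)/I_2\bigr)(1+o(1)) = b\bigl(c_1'/I_1 + (1-c_1')/I_2\bigr)(1+o(1))$. The SLLN applied to each of the two phases gives $\tau(b)/k^\star(b) \to 1$ in $\mP^{d_1}_1$-probability, and a Cram\'er--Chernoff deviation estimate on partial sums of $Z_i(X_j)$ provides a tail bound $\mP^{d_1}_1\bigl(\tau(b) > (1+\varepsilon)k^\star(b) + m\bigr) \leq q^m$ for some $q \in (0,1)$ depending only on $\min(I_1, I_2)$, which upgrades the probability convergence to $\mE^{d_1}_1[\tau(b)] \leq k^\star(b)(1+o(1))$. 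Combining with the reduction step yields \eqref{eq:1}--\eqref{eq:2}; the D-CuSum case is the same argument with the $\rho_1$ corrections deleted.

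The main obstacle is the uniform-integrability step: one must verify that the exponential tail on $\tau(b) - k^\star(b)$ is uniform in the parameters $(b, d_1, \rho_1)$ as they jointly grow, which requires showing that the centered random walk has sufficiently light tails at a scale that does not depend on $b$ or $d_1$. This follows from the fixed positivity of $I_1$ and $I_2$, but needs care because the increment distribution switches from $Z_1$ to $Z_2$ at time $d_1$, so the deviation bound must be applied separately on the two phases and then concatenated. A secondary subtlety is the boundary case $c_1' = 1$, where $k^\star(b) - d_1 = o(k^\star(b))$ and both case formulas must coincide; here one shows that whether detection occurs just before or just after the phase transition, the leading-order rate is the same $b/I_1 (1+o(1))$, by comparing $k^\star(b)$ to $d_1$ using the $(1+o(1))$ factor in \eqref{eq:db}.
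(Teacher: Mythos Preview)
Your reduction to $\mE^{d_1}_1[\tau(b)]$ is correct and matches the paper (the paper states it as an exact equality $J_\text{L}=J_\text{P}=\mE^{d_1}_1[\tau]$, which follows from the same regenerative argument you describe). Your lower bound on $\widetilde W[k]$ by specializing to $v_1=1$, $v_2=d_1+1$ is also exactly what the paper uses (it calls this $w[1,k,d_1+1]$).

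The genuine gap is your uniform-integrability step. You invoke a Cram\'er--Chernoff tail estimate to get $\mP^{d_1}_1(\tau(b)>(1+\varepsilon)k^\star(b)+m)\leq q^m$, but the only moment assumption in the paper is that $I_i=\mE_{f_i}[Z_i(X)]$ is positive and finite; there is no assumption that $Z_i(X)$ has a finite moment generating function under $f_i$. Without exponential moments, Cram\'er--Chernoff is unavailable, and no such per-step geometric decay rate $q$ can be extracted from $\min(I_1,I_2)$ alone. (Your own caveat ``this follows from the fixed positivity of $I_1$ and $I_2$'' is precisely where the argument fails: positivity of the mean does not control large deviations.)

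The paper sidesteps this by a block argument that needs only the weak law of large numbers. It sets a block length $n_b\sim b(1+\epsilon)/I_1$ (in case (i); similarly with $n_b'$ in case (ii)) and observes that on each block $[(u-1)n_b+1, u n_b]$ the event $\{w[(u-1)n_b+1,un_b,d_1+1]\leq b\}$ has probability at most $\delta$ by the WLLN, with $\delta\to 0$ as $b\to\infty$. Since the blocks are independent, $\mP^{d_1}_1(\widetilde\tau(b)>i\,n_b)\leq\delta^i$, and summing over $i$ gives $\mE^{d_1}_1[\widetilde\tau(b)]\leq n_b/(1-\delta)+O(1)$. For blocks straddling the transition at $d_1$ the paper enlarges the block by a fixed integer factor $t=\lceil I_1/\min(I_1,I_2)\rceil+1$ so that the WLLN still forces crossing with high probability regardless of the $f_1$/$f_2$ mixture inside the block. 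This delivers geometric decay in the number of \emph{blocks}, not the number of time steps, and requires nothing beyond the first moment.

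A minor point: the paper handles the D-CuSum bound by the pathwise domination $\widetilde W[k]\leq \widehat W[k]$ (since the weights are $<1$), so $\widehat\tau(b)\leq\widetilde\tau(b)$ and the WD-CuSum bound transfers for free; you do not need a separate argument.
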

\begin{proof}
By  Remark \ref{remark1} it follows that  for the D-CuSum the worse-case scenario for the observations up to the change-point $v_1$ is when $\widehat W[v_1]=0$, and consequently for every $b>0$ and $\bd$ we have
	\begin{flalign}
	J^\bd_\text{L}(	\widehat{\tau}(b)) &=  J^\bd_\text{P}(	\widehat{\tau}(b)) = \mE_1^\bd[	\widehat{\tau}(b)].
	\end{flalign}
Similarly we can argue that 
		\begin{flalign}
	J^\bd_\text{L}(	\widetilde{\tau}(b)) &=  J^\bd_\text{P}(	\widetilde{\tau}(b)) = \mE_1^\bd[	\widetilde{\tau}(b)].
	\end{flalign}
Thus,  the WADD for the D-CuSum and the WD-CuSum algorithms is achieved when $v_1=1$ under both Lorden's and Pollak's criteria.  Moreover,  by the construction of the D-CuSum and the WD-CuSum algorithms in \eqref{eq:dcusum} and \eqref{eq:wdcusum}, for any $k\geq 1$ we have
\begin{flalign}\label{eq:57}
\widetilde W[k]\leq \widehat W[k],
\end{flalign}
which is due to the fact that the weights in the WD-CuSum algorithm are less than one. Therefore,  with the same threshold $b$, the WD-CuSum algorithm will always stop later than the D-CuSum algorithm, thus
\begin{flalign} \label{eq:comp}
\mE_1^{d_1}[\widehat \tau(b)] \leq \mE_1^{d_1}[\widetilde \tau(b)],
\end{flalign}
and it  suffices to upper bound $\mE_1^{d_1}[\widetilde \tau(b)]$, which is done in Appendix \ref{app:upperbound}.
\end{proof}



The proof of the asymptotic upper bounds on WADD is based on an argument of partitioning the samples into independent blocks and the Law of Large Numbers for log-likelihood ratio statistics similar to those in \cite[Theorem 4]{lai1998information}. The major difficulty is due to the more complicated post-change statistic, which is a cascading of the transient and persistent distributions. In the proof, a novel approach of partitioning samples is needed to guarantee 
large probability of crossing the threshold within each block. Moreover, a decomposition of the sum of log-likelihood of the samples from $f_1$ and $f_2$, respectively, is also necessary before the application of the Law of Large Numbers.

The WADD is upper bounded differently in two regimes, depending on $c_1'$, which determines the scaling behavior between $d_1$ and $b$.
If $d_1$ is ``large", then the WD-CuSum algorithm stops within the transient phase with high probability, such that the asymptotic upper bound only depends on $I_1$; if $d_1$ is ``small", then the WD-CuSum algorithm stops within the persistent phase with high probability, such that the asymptotic upper bound depends on a mixture of $I_1$ and $I_2$. This is consistent with the insights gained from the asymptotic universal lower bound in Theorem \ref{thm:lower}.
%
%

\subsection{Asymptotic Optimality}
We are now ready to establish the asymptotic optimality of the proposed rules  with respect to both Lorden's and Pollak's criteria under every possible post-change regime.

\begin{theorem}\label{thm:optimal2}
	Consider the QCD problem under transient dynamics described in Section \ref{sec:model} with $L=2$. Suppose that    $d_1, \gamma \rightarrow \infty$ according to \eqref{scaling}.
	
	(i) If $b$ can be  selected so that $\widehat{\tau}(b) \in C_\gamma$ and 	 $b\sim\log\gamma$ as  	$\gamma \rightarrow \infty$, then 
	\begin{flalign}
	J^{d_1}_\text{L}( \widehat{\tau}(b)) &\sim   \inf_{\tau\in\mathcal C_\gamma}   J^{d_1}_\text{L}(\tau)
	\sim J^{d_1}_\text{P}(\widehat{\tau}(b)) \sim    \inf_{\tau\in\mathcal C_\gamma} J^{d_1}_\text{P}(\tau)\nn\\
	&\sim \left\{\begin{aligned}
	&\frac{\log\gamma}{I_1},&\text{ if }c_1>1,\\
	&\log\gamma\left(\frac{c_1}{I_1}+\frac{1-c_1}{I_2}\right), &\text{ if }c_1\leq 1.
	\end{aligned}\right. 
	\end{flalign}
	
	(ii) Suppose that $b$ is  selected so that 
	 $\mE_{\infty}[\widetilde\tau(b)] \geq \gamma$  and  $b\sim\log\gamma$ as  	$\gamma \rightarrow \infty$. If  $\rho_1 \rightarrow 0$ according to \eqref{eq:rho}, then
	\begin{flalign}
	J^{d_1}_\text{L}( \widetilde{\tau}(b)) &\sim    \inf_{\tau\in\mathcal C_\gamma} J^{d_1}_\text{L}(\tau)\sim
	J^{d_1}_P(\widetilde{\tau}(b)) \sim     \inf_{\tau\in\mathcal C_\gamma} J^{d_1}_\text{P}(\tau)\nn\\
	&\sim \left\{\begin{aligned}
	&\frac{\log\gamma}{I_1},&\text{ if }c_1>1,\\
	&\log\gamma\left(\frac{c_1}{I_1}+\frac{1-c_1}{I_2}\right), &\text{ if }c_1\leq 1.
		\end{aligned}\right. 
	\end{flalign}
\end{theorem}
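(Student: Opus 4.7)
The plan is to sandwich the worst-case delay of each algorithm between the universal lower bound of Theorem \ref{thm:lower} and the algorithm-specific upper bound of Theorem \ref{thm:upper}; under the prescribed threshold scaling $b\sim\log\gamma$, the two bounds coincide to first order, which yields the asymptotic equivalences.

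The first step is to reconcile the two scaling hypotheses. Under \eqref{scaling}, $d_1\sim c_1\log\gamma/I_1$, and the statement fixes $b\sim\log\gamma$. Dividing, $d_1\sim c_1 b/I_1$, so \eqref{eq:db} holds with $c_1'=c_1$. This matching of constants is what allows the lower bound (indexed by $c_1$) to meet the upper bound (indexed by $c_1'$) in the limit.

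For the lower half of the sandwich, since $\widehat\tau(b)\in\mathcal C_\gamma$ in part (i) (and analogously $\widetilde\tau(b)\in\mathcal C_\gamma$ in part (ii) by the corollary to Theorem \ref{thm:lowerboundarl}), one has
\[
J^{d_1}_\text{L}(\widehat\tau(b))\;\geq\;\inf_{\tau\in\mathcal C_\gamma} J^{d_1}_\text{L}(\tau)\;\geq\;\inf_{\tau\in\mathcal C_\gamma} J^{d_1}_\text{P}(\tau),
\]
where the second inequality is the pointwise bound $J^{d_1}_\text{L}\geq J^{d_1}_\text{P}$ inherited from essential-supremum versus conditional-expectation. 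Theorem \ref{thm:lower} bounds the rightmost infimum from below by $\log\gamma/I_1\cdot(1-o(1))$ when $c_1\geq 1$ and by $\log\gamma(c_1/I_1+(1-c_1)/I_2)(1-o(1))$ when $c_1<1$. The same chain of inequalities is used for $\widetilde\tau(b)$ in part (ii).

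For the upper half, Theorem \ref{thm:upper} gives $J^{d_1}_\text{L}(\widehat\tau(b))=J^{d_1}_\text{P}(\widehat\tau(b))$ (and similarly for $\widetilde\tau(b)$, where the hypothesis \eqref{eq:rho} is explicitly assumed in part (ii)) and provides the bound $b/I_1\cdot(1+o(1))$ when $c_1'>1$ and $b(c_1'/I_1+(1-c_1')/I_2)(1+o(1))$ when $c_1'\leq 1$. Substituting $c_1'=c_1$ and $b\sim\log\gamma$ turns these into exactly the formulas claimed. The boundary case $c_1=1$ is consistent because both branches of the formula collapse to $\log\gamma/I_1$. Combining the two halves forces $J^{d_1}_\text{L}(\cdot)$, $J^{d_1}_\text{P}(\cdot)$, $\inf_{\mathcal C_\gamma} J^{d_1}_\text{L}$, and $\inf_{\mathcal C_\gamma}J^{d_1}_\text{P}$ to share the same first-order asymptotics, which is precisely the stated conclusion. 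The main obstacle lies upstream, in Theorems \ref{thm:lower} and \ref{thm:upper} themselves; here the argument is purely a matter of chaining inequalities and reconciling the scaling constants $c_1$ and $c_1'$, together with a brief check that the hypotheses of Theorem \ref{thm:upper} (in particular \eqref{eq:rho} for the WD-CuSum, and $b\sim\log\gamma$ for both) are preserved under the threshold choices prescribed in the statement.
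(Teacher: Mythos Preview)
Your proposal is correct and follows the same approach as the paper, which simply states that the result follows from Proposition~\ref{prop:lowerarldcusum} and Theorems~\ref{thm:lowerboundarl}, \ref{thm:lower}, and \ref{thm:upper}. You have spelled out the sandwich argument (matching $c_1'=c_1$ via $b\sim\log\gamma$, lower bound from Theorem~\ref{thm:lower}, upper bound from Theorem~\ref{thm:upper}) more explicitly than the paper does, but the underlying reasoning is identical.
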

\begin{proof}
	The results follow from Proposition \ref{prop:lowerarldcusum} and Theorems \ref{thm:lowerboundarl}, \ref{thm:lower}, and \ref{thm:upper}.
\end{proof}

\begin{figure}[htb]
	\centering
	\includegraphics[width=1\linewidth]{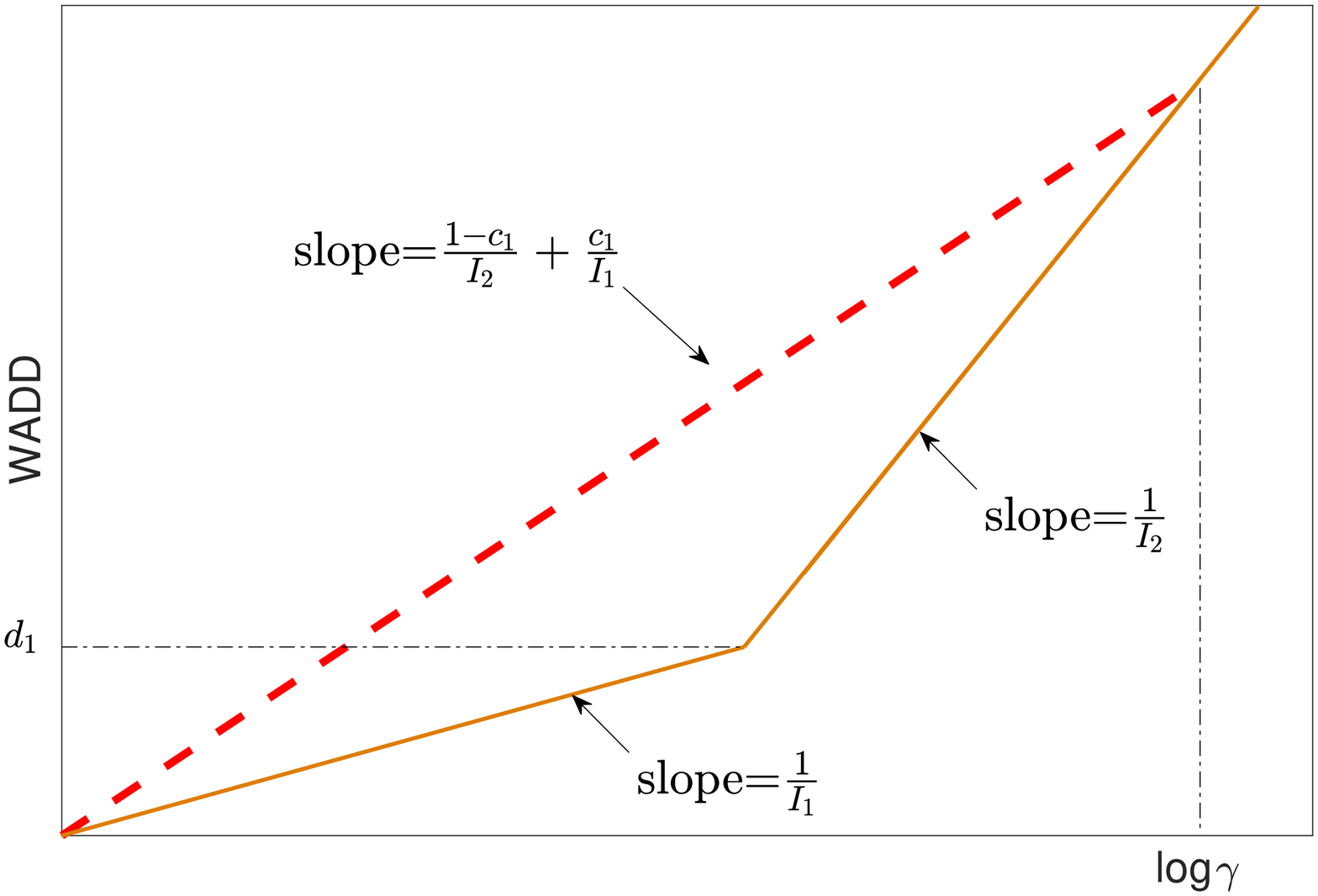}\\
	\caption{A heuristic explanation for the dichotomy in Theorem \ref{thm:optimal2}. }\label{fig:heuristic}
\end{figure}
A heuristic explanation for the dichotomy in Theorem \ref{thm:optimal2} is as follows (see also Fig.~\ref{fig:heuristic}). If we wish to detect a change from $f_0$ to $f_1$ with ARL $\gamma$, we have WADD$\sim{\log\gamma}/{I_1}$ (see, e.g., Theorem 1 in \cite{lai1998information}). However, we only have $d_1$ samples from $f_1$ within the transient phase. If $d_1\geq {\log\gamma}/{I_1}$, i.e., $c_1\geq 1$, then the problem is similar to one of testing the change from $f_0$ to $f_1$, and WADD increases when $\log\gamma$ increases with slope ${1}/{I_1}$, i.e., 
\begin{flalign}
\text{WADD}\sim\frac{\log\gamma}{I_1}.
\end{flalign}
 If $d_1<{\log\gamma}/{I_1}$, i.e., $c_1<1$, we then need further information from $f_2$, and WADD increases when $\log\gamma$ increases with slope ${1}/{I_2}$. To obtain the overall slope, it then follows that
\begin{flalign}
d_1I_1+(\text{WADD}-d_1)I_2&\approx \log\gamma,
\end{flalign}
which implies that
\begin{flalign}
\text{WADD}&\approx d_1+\frac{\log\gamma-d_1I_1}{I_2}\nn\\
&\sim\log\gamma\left(\frac{1-c_1}{I_2}+\frac{c_1}{I_1}\right).
\end{flalign}

\subsection{Generalization to Arbitrary $L$}
The asymptotic universal lower bound on the WADD can be extended to the case with arbitrary $L$.
\begin{theorem}\label{thm:lower_L}
	Consider the QCD problem under transient dynamics described in Section \ref{sec:model} with an arbitrary $L\geq 2$. Suppose that \eqref{scaling} holds.  If $h=\min\{1\leq j\leq L:\sum_{i=1}^{j}c_i\geq1\}$, then as $\gamma\rightarrow\infty$
	\begin{flalign}
	\inf_{\tau\in\mathcal C_\gamma} J^\bd_\text{L}(\tau)\nn &\geq \inf_{\tau\in\mathcal C_\gamma}J^\bd_\text{P}(\tau) \nn\\
	&\geq \log \gamma\left({\sum_{i=1}^{h-1}\frac{c_i}{I_i}}+\frac{1-\sum_{i=1}^{h-1}c_i}{I_h}\right)(1-o(1)).
	\end{flalign}
	
\end{theorem}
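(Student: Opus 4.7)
The plan is to extend the change-of-measure and Weak Law of Large Numbers argument of Theorem \ref{thm:lower} to general $L \geq 2$. Set
\[
N_\gamma^* := \log\gamma\left(\sum_{i=1}^{h-1}\frac{c_i}{I_i} + \frac{1-\sum_{i=1}^{h-1}c_i}{I_h}\right),
\]
and fix a small $\epsilon > 0$; let $m_\gamma := \lfloor (1-\epsilon)N_\gamma^* \rfloor$. The key structural observation is that $h$ is defined precisely so that, under \eqref{scaling}, the length-$m_\gamma$ window starting at the change-point $v_1$ ends strictly inside the $h$-th phase: for all sufficiently large $\gamma$, $\sum_{i=1}^{h-1} d_i \leq m_\gamma < \sum_{i=1}^{h} d_i$. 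This generalizes the dichotomy in Theorem \ref{thm:lower}, where $h=1$ corresponds to $c_1 \geq 1$ and $h=2$ to $c_1 < 1$.

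Second, I would decompose the log-likelihood ratio over this window into $h$ independent blocks, one per phase. Under $\mP^\bd_{v_1}$,
\[
\log\mg(v_1 + m_\gamma - 1, v_1, \bd) = \sum_{i=1}^{h-1} \sum_{k=v_i}^{v_{i+1}-1} Z_i(X_k) + \sum_{k=v_h}^{v_1+m_\gamma-1} Z_h(X_k),
\]
where each inner sum is iid with positive mean $I_i$. By the WLLN, the $i$-th block contributes $d_i I_i \sim c_i \log\gamma$ for $i<h$, and the $h$-th block contributes $I_h\bigl(m_\gamma - \sum_{i=1}^{h-1} d_i\bigr)$. A direct computation using $m_\gamma = (1-\epsilon)N_\gamma^*$ shows that the aggregate equals $(1 - \epsilon I_h \Delta)\log\gamma\,(1+o_\mP(1))$, where $\Delta := N_\gamma^*/\log\gamma > 0$. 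Hence with high $\mP^\bd_{v_1}$-probability, $\log\mg(v_1+m_\gamma-1,v_1,\bd) \leq (1 - \epsilon I_h \Delta/2)\log\gamma$ for all sufficiently large $\gamma$.

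Third, I would invoke the standard change-of-measure argument used in the proof of Theorem \ref{thm:lower} and in \cite{lai1998information}: for any $\tau \in \mathcal C_\gamma$, combining the WLLN bound from Step 2 with $\mE_\infty[\tau]\geq \gamma$ (and a Doob maximal-inequality bound on the $\mP_\infty$-martingale $\{\mg(k, v_1, \bd)\}_{k \geq v_1}$ to handle the sup of the likelihood ratio over the random stopping instant inside the window) yields $\sup_{v_1}\mP^\bd_{v_1}(\tau - v_1 < m_\gamma \mid \tau \geq v_1) \to 0$ as $\gamma \to \infty$. Combined with $\mE^\bd_{v_1}[\tau - v_1 \mid \tau \geq v_1] \geq m_\gamma\,\mP^\bd_{v_1}(\tau - v_1 \geq m_\gamma \mid \tau \geq v_1)$, this gives $J^\bd_P(\tau) \geq m_\gamma(1-o(1)) = (1-\epsilon)N_\gamma^*(1+o(1))$. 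Sending $\epsilon\downarrow 0$ yields the claimed lower bound for Pollak's criterion, and $J^\bd_L(\tau) \geq J^\bd_P(\tau)$ extends it to Lorden's criterion.

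The main obstacle will be the bookkeeping around boundary cases $c_i = 0$ for some $i < h$: here $d_i = o(\log\gamma)$, so the $i$-th transient phase contributes $o(\log\gamma)$ to both the window length and the log-likelihood, and these terms must be absorbed into the $o_\mP(1)$ remainders without disrupting the window localization $\sum_{i=1}^{h-1} d_i \leq m_\gamma < \sum_{i=1}^{h} d_i$. A secondary concern is that when $\sum_{i=1}^{h-1}c_i$ approaches $1$, the window falls asymptotically near the boundary between phases $h-1$ and $h$, requiring a slight perturbation of $\epsilon$ to preserve strict containment. Both issues are resolved by taking $\gamma$ large enough and shrinking $\epsilon$ appropriately; the rest of the argument is a direct block-by-block extension of the $L=2$ case.
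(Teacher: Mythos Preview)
Your overall strategy---window localization via the definition of $h$, block decomposition of $\log\mg$ over the $h$ phases, WLLN per block, and the change-of-measure step from \cite{lai1998information}---is exactly the generalization the paper has in mind (the paper simply states that the proof is a straightforward extension of Theorem~\ref{thm:lower} and omits it).

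There is, however, one genuine gap in your Step~2/Step~3. In Step~2 you only control the log-likelihood ratio at the \emph{endpoint} $v_1+m_\gamma-1$, whereas the change-of-measure argument (as in \eqref{eq:20}--\eqref{eq:27} of the paper) requires you to bound
\[
\mP^{\bd}_{v_1}\!\left(\max_{0\leq j<m_\gamma}\log\mg(v_1+j,v_1,\bd)\geq a\right),
\]
i.e., the running maximum over the window, under $\mP^{\bd}_{v_1}$. Your proposed fix---a Doob maximal inequality for the $\mP_\infty$-martingale $\{\mg(k,v_1,\bd)\}$---does not help here: Doob would give a bound under $\mP_\infty$, but the term you need to kill lives under $\mP^{\bd}_{v_1}$. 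The paper's device (see \eqref{eq:35}--\eqref{eq:36}) is instead to bound the running maximum by the sum of per-phase running maxima,
\[
\max_{0\leq j<m_\gamma}\log\mg(v_1+j,v_1,\bd)\;\leq\;\sum_{i=1}^{h}\max_{j}\sum_{k\in\text{phase }i}Z_i(X_k),
\]
and then apply Lemma~\ref{lemma:slln} (WLLN for maxima of i.i.d.\ partial sums with positive mean) to each block separately. With that correction, and choosing $a=(1-\epsilon')\log\gamma$ for a suitable $\epsilon'$ depending on $\epsilon$ and the $c_i$'s (as you essentially do), your argument goes through and coincides with the paper's intended proof.
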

\begin{proof}
	The proof is a cumbersome but straightforward generalization of  {the proof of Theorem \ref{thm:lower}}, and is omitted.
\end{proof}

We further assume that there is a constant $c_i'\in[0,\infty]$ such that
\begin{flalign}\label{eq:dbL}
d_i\sim c_i'\frac{b}{I_1},
\end{flalign}
for $1\leq i\leq L-1$.
If we choose $b\sim \log\gamma$, then $c_i=c_i'$, $1\leq i\leq L-1$.

As in the case with $L=2$, we need to design the weights $\{\rho_i$, $1 \leq i \leq L-1\}$ in  the WD-CuSum algorithm so that their effect  is asymptotically negligible. Similarly to \eqref{eq:rho}, we choose $\rho_i$ such that as $b\rightarrow\infty$,
\begin{flalign}\label{eq:rho_L}
	\rho_i\rightarrow 0, \text{ and }\frac{-\log\rho_i}{b}\rightarrow 0,
\end{flalign}
for $i=1,\ldots, L-1$. We then obtain the following asymptotic upper bounds on the WADD of the D-CuSum and the WD-CuSum algorithms.

\begin{theorem}\label{thm:upper_L}
	Consider the QCD problem under transient dynamics described in Section \ref{sec:model} with an arbitrary $L$. Suppose   \eqref{eq:dbL} and \eqref{eq:rho_L}  hold.
	Let
	$h=\min\{1\leq j\leq L:\sum_{i=1}^{j}c'_i\geq1\}$, then as $\gamma\rightarrow\infty$
	\begin{flalign}
	J^\bd_\text{L}(\widehat \tau(b)) &=  J^\bd_\text{P}(\widehat \tau(b)) \leq J^\bd_\text{L}(\widetilde\tau(b)) =  J^\bd_\text{P}(\widetilde\tau(b)) \nn\\
	&\leq b\left({\sum_{i=1}^{h-1}\frac{c'_i}{I_i}}+\frac{1-\sum_{i=1}^{h-1}c'_i}{I_h}\right)(1+o(1)).
	\end{flalign}
\end{theorem}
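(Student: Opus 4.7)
The plan is to mirror the proof of Theorem \ref{thm:upper}, reducing to a single expectation and then bounding that expectation via a sub-path/LLN argument. First, the same two reductions used for $L=2$ go through verbatim for arbitrary $L$: (a) by Remark \ref{remark1} and the analogous regeneration of $(\widetilde W[k])^+$ coming from the recursion \eqref{eq:recursion_wdcusum}, the worst-case path in the definitions of $J^\bd_\text{L}$ and $J^\bd_\text{P}$ is the one on which the statistic equals zero just before $v_1$, so $J^\bd_\text{L}(\widehat\tau(b))=J^\bd_\text{P}(\widehat\tau(b))=\mE_1^\bd[\widehat\tau(b)]$ and likewise for $\widetilde\tau(b)$; (b) every factor $(1-\rho_i)$ and $\rho_i^{\mathds{1}_{\{k\geq v_{i+1}\}}}$ in \eqref{eq:wdcusum_test} is in $(0,1)$, so pathwise $\widetilde W[k]\leq \widehat W[k]$, which gives $\mE_1^\bd[\widehat\tau(b)]\leq \mE_1^\bd[\widetilde\tau(b)]$ as in \eqref{eq:comp}. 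It therefore suffices to upper bound $\mE_1^\bd[\widetilde\tau(b)]$.

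Second, I would lower bound $\widetilde W[k]$ by evaluating the max in \eqref{eq:wdcusum_test} at the specific tuple $v_i^\star=1+\sum_{j=1}^{i-1}d_j$, $i=1,\ldots,L$, i.e., the true sequence of change-points. Dropping all other tuples, this yields the pathwise inequality
\begin{flalign*}
\widetilde W[k]\;\geq\;\sum_{i=1}^{h-1}\log\rho_i+\sum_{i=1}^{h-1}\sum_{j=v_i^\star}^{v_{i+1}^\star-1}\bigl(Z_i(X_j)+\log(1-\rho_i)\bigr)+\sum_{j=v_h^\star}^{k}\bigl(Z_h(X_j)+\log(1-\rho_h)\bigr),
\end{flalign*}
valid for any $k$ in phase $h$. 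Under $\mP_1^\bd$ the $X_j$'s are independent draws from $f_i$ on $[v_i^\star,v_{i+1}^\star)$, so by the WLLN each phase-$i$ inner sum is $d_i I_i(1+o_\mP(1))$ and the final sum is $(k-v_h^\star+1)I_h(1+o_\mP(1))$. Under the scaling \eqref{eq:rho_L} the weight corrections $\log(1-\rho_i)=o(1)$ and $\log\rho_i=o(b)$ are asymptotically negligible. Choose
$$k=k^\star\;=\;v_h^\star+\Bigl\lceil(b-\textstyle\sum_{i=1}^{h-1}d_i I_i)/I_h\Bigr\rceil;$$
by \eqref{eq:dbL} and the defining property of $h$, $k^\star\sim b\bigl(\sum_{i=1}^{h-1}c'_i/I_i+(1-\sum_{i=1}^{h-1}c'_i)/I_h\bigr)$, and the lower bound above exceeds $b$ with probability tending to one.

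Third, I would convert this into a bound on the expectation through a block argument identical in spirit to Lai's: iterate the above construction on consecutive disjoint windows of length $k^\star$ after $v_1=1$, define $A_n$ as the event that the window-$n$ lower bound fails to cross $b$, and bound $\mP_1^\bd(A_n)$ above by a constant $p<1$ for all sufficiently large $b$ (actually $p=o(1)$ by the WLLN step). Since the successive windows use disjoint sample blocks and the sub-path statistic on each block is a fresh independent sum, $\mE_1^\bd[\widetilde\tau(b)]\leq k^\star/(1-p)=k^\star(1+o(1))$, which yields the stated upper bound.

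The main obstacle is the third step: because the WD-CuSum statistic is not strictly regenerative (unlike standard CuSum), one cannot directly reset across blocks. The workaround is to work with the sub-path lower bound, which \emph{is} a sum of block-independent increments under $\mP_1^\bd$, so the geometric tail argument applies to that surrogate. A subsidiary technical point is the boundary case $\sum_{i=1}^{h-1}c'_i=1$, where the number of phase-$h$ samples in $k^\star$ is $o(b)$ and one must check that the phase-$h$ term in the sub-path lower bound still exceeds a vanishing residual with high probability; this follows from the same WLLN application. Handling the $L-1$ accumulated penalties $\sum_{i=1}^{h-1}\log\rho_i$ is straightforward under \eqref{eq:rho_L} because $L$ is fixed and each term is $o(b)$.
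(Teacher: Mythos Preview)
Your reductions (a) and (b) and your sub-path lower bound at the true change-points are exactly how the paper proceeds (the paper only states that the argument is a ``cumbersome but straightforward generalization'' of the $L=2$ proof in Appendix~\ref{app:upperbound}). The gap is in your third step. You propose to tile $[1,\infty)$ into consecutive windows of the \emph{same} length $k^\star$ and claim that on each window the sub-path lower bound exceeds $b$ with probability $1-p$, $p=o(1)$. This fails for windows $n\geq 2$. Once the process has exited the first $h-1$ phases, the samples in window $n$ come only from $f_h$ (or later phases), so the best sub-path on that window no longer has the phase-mixed drift to which $k^\star$ was calibrated; it has the drift of whatever single phase the window lies in. Concretely, take $L=2$, $c'_1=1/2$, $I_1=1$, $I_2=1/10$: then $h=2$, $k^\star\sim b(0.5+5)=5.5\,b$, but every window after the first lies entirely in phase $2$, and the phase-$2$ sub-path sum on such a window has mean $\sim k^\star I_2=0.55\,b<b$, so its probability of exceeding $b$ tends to $0$, not to $1$. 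The geometric tail bound therefore collapses; even the weaker requirement that $p$ be bounded away from $1$ uniformly in $n$ fails here.

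The paper's $L=2$ proof avoids this by using \emph{two} block sizes: the first block has the calibrated length $n'_b\sim k^\star$, while all subsequent blocks are enlarged by a fixed integer factor $t'$ (depending only on the $I_i$'s and $c'_i$'s) chosen so that $t'\,n'_b\cdot\min_{1\leq i\leq L} I_i>b(1+\epsilon)$. Then every later block crosses $b$ with probability $1-\delta$ regardless of which phase it falls in, and since the first block already succeeds with probability $1-\delta$, the tail contribution is $t'\sum_{\ell\geq 1}\delta^{\ell}=o(1)$. The generalization to arbitrary $L$ uses the same device (this is the ``cumbersome'' part). A separate minor point: your $k^\star$ as written sits exactly at the threshold, so even the first-block crossing probability does not tend to $1$; you need the $(1+\epsilon)$ inflation that the paper builds into $n_b$ and $n'_b$.
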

\begin{proof}
	The proof is a cumbersome but straightforward generalization of the proof of Theorem \ref{thm:upper}, and is omitted.
\end{proof}

We are then ready to establish the asymptotic optimality of the proposed algorithms with respect to both Lorden's and Pollak's criteria under every possible post-change regime for  $L\geq 2$.
\begin{theorem}\label{thm:optimalL}
		Consider the QCD problem under transient dynamics described in Section \ref{sec:model} with $L\geq 2$. Assume that \eqref{scaling} is satisfied,
	as $\gamma, d \rightarrow \infty$. Let $h=\min\{1\leq j\leq L:\sum_{i=1}^{j}c_i\geq1\}$.
	
	(i) If $\exists b\sim\log\gamma$ such that $\mE_{\infty}[\widehat{\tau}(b))] \geq\gamma$. Then, as $\gamma\rightarrow \infty$,
	\begin{flalign}
	J^\bd_\text{L}( \widehat{\tau}(b)) &\sim\inf_{\tau\in\mathcal C_\gamma}   J^\bd_\text{L}(\tau)
	\sim J^\bd_\text{P}(\widehat{\tau}(b))\sim \inf_{\tau\in\mathcal C_\gamma} J^\bd_\text{P}(\tau)\nn\\
	&\sim    \log \gamma \left({\sum_{i=1}^{h-1}\frac{c_i}{I_i}}+\frac{1-\sum_{i=1}^{h-1}c_i}{I_h}\right).
	\end{flalign}
	
	(ii) Choose $\rho_i$ for $1\leq i\leq L-1$ such that \eqref{eq:rho_L} is satisfied and $b\sim\log\gamma$  such that  $\mE_{\infty}[\widetilde \tau(b)] \geq\gamma$.  Then,  as $\gamma\rightarrow \infty$,
	\begin{flalign*}
	J^\bd_\text{L}( \widetilde{\tau}(b)) &\sim \inf_{\tau\in\mathcal C_\gamma} J^\bd_\text{L}(\tau)\sim
	J^\bd_P(\widetilde{\tau}(b)) \sim \inf_{\tau\in\mathcal C_\gamma} J^\bd_\text{P}(\tau)\nn\\
	&\sim   \log \gamma\left({\sum_{i=1}^{h-1}\frac{c_i}{I_i}}+\frac{1-\sum_{i=1}^{h-1}c_i}{I_h}\right).
	\end{flalign*}
\end{theorem}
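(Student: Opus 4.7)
The plan is to obtain Theorem \ref{thm:optimalL} by sandwiching the WADD of each algorithm between the universal asymptotic lower bound of Theorem \ref{thm:lower_L} and the upper bounds of Theorem \ref{thm:upper_L}, with the choice $b \sim \log \gamma$ legitimized via Proposition \ref{prop:lowerarldcusum} for D-CuSum and Theorem \ref{thm:lowerboundarl} for WD-CuSum. Since all the heavy analytic work has already been done in those three prior results, I expect this theorem to reduce to a bookkeeping step that reconciles the two parameterizations of the transient-duration scaling, namely \eqref{scaling} (with $c_i$ referenced to $\log\gamma$) and \eqref{eq:dbL} (with $c_i'$ referenced to $b$).

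For part (i), I will start from the hypothesis that $b \sim \log \gamma$ with $\mE_\infty[\widehat \tau(b)] \geq \gamma$, so that $\widehat \tau(b) \in \mathcal C_\gamma$ and hence
\[
\inf_{\tau \in \mathcal C_\gamma} J^\bd_\text{P}(\tau) \;\leq\; \inf_{\tau \in \mathcal C_\gamma} J^\bd_\text{L}(\tau) \;\leq\; J^\bd_\text{L}(\widehat \tau(b)).
\]
Under \eqref{scaling} together with $b \sim \log \gamma$, the scaling \eqref{eq:dbL} follows with $c_i' = c_i$, so the cutoff $h$ coincides between Theorems \ref{thm:lower_L} and \ref{thm:upper_L}. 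Setting $K_h = \sum_{i=1}^{h-1} c_i/I_i + \bigl(1 - \sum_{i=1}^{h-1} c_i\bigr)/I_h$, the two theorems together give $J^\bd_\text{L}(\widehat \tau(b)) \leq \log\gamma \cdot K_h (1+o(1))$ and $\inf_{\tau \in \mathcal C_\gamma} J^\bd_\text{P}(\tau) \geq \log \gamma \cdot K_h (1-o(1))$, which pinches the chain of four quantities together and yields the claimed first-order equivalence.

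Part (ii) will follow the identical template. The hypothesis $\widetilde \tau(b) \in \mathcal C_\gamma$ is again built in, while the extra condition \eqref{eq:rho_L} on the weights $\rho_i$ is precisely what Theorem \ref{thm:upper_L} requires in order for the weights to contribute only a vanishing additive term to the ``effective threshold'' used by WD-CuSum; without this condition the upper bound would retain a residual $-\log \rho_i / b$ contribution that would not match the universal lower bound. Once this is noted, the sandwich argument is verbatim the same as in part (i), with $\widetilde\tau(b)$ replacing $\widehat\tau(b)$.

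The only real subtlety I foresee is consistency of the scaling constants when translating between $b$ and $\log\gamma$: I must verify that the indices $h$ arising from \eqref{scaling} and \eqref{eq:dbL} agree and that no $o(1)$ slack is lost in the conversion. Beyond this, the proof is essentially a citation exercise. The genuinely hard steps — the change-of-measure argument underlying Theorem \ref{thm:lower_L} (which generalizes the cascade of transient and persistent distributions through a careful decomposition of log-likelihood sums before invoking the weak law) and the blockwise partition combined with the law of large numbers underlying Theorem \ref{thm:upper_L} — have already been handled, and the corollaries to Proposition \ref{prop:lowerarldcusum} and Theorem \ref{thm:lowerboundarl} supply the required threshold calibration, modulo the regenerative condition \eqref{eq:conditions} needed in the D-CuSum case.
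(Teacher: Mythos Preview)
Your proposal is correct and mirrors the paper's own proof, which simply states that the result follows from Proposition~\ref{prop:lowerarldcusum} and Theorems~\ref{thm:lowerboundarl}, \ref{thm:lower_L}, and \ref{thm:upper_L}. Your additional care in verifying that $b\sim\log\gamma$ forces $c_i'=c_i$ (and hence the same cutoff $h$) is exactly the reconciliation the paper records just before stating Theorem~\ref{thm:upper_L}.
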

\begin{proof}
	The results follow from Proposition \ref{prop:lowerarldcusum} and Theorems \ref{thm:lowerboundarl}, \ref{thm:lower_L} and \ref{thm:upper_L}.
\end{proof}

A heuristic explanation for the {\em polychotomy} for  the general case with arbitrary $L$ in Theorem \ref{thm:optimalL} is as follows (see also Fig.~\ref{fig:heuristicL}). If we wish to test a change from $f_0$ to $f_1$ with ARL $\gamma$, we have WADD$\sim {\log\gamma}/{I_1}$. If $d_1<{\log \gamma}/{I_1}$, we further need samples from $f_2$. If $d_1I_1+d_2I_2$ is still less than $\log\gamma$, we then use samples from $f_3$. Up to the $h$-th transient phase, we have collected sufficient number of samples such that
\begin{flalign}
\sum_{i=1}^h d_iI_i>\log \gamma.
\end{flalign}
To obtain the overall slope, it then follows that
\begin{flalign}
\sum_{i=1}^{h-1} d_iI_i +\left(\text{WADD}-\sum_{i=1}^{h-1} d_i\right)I_h\approx \log\gamma,
\end{flalign}
which implies that
\begin{flalign}
\text{WADD}&\approx \frac{\log\gamma-\sum_{i=1}^{h-1} d_iI_i }{I_h}+\sum_{i=1}^{h-1} d_i\nn\\
&\sim \log\gamma\left(\frac{1-\sum_{i=1}^{h-1}c_i}{I_h}+\sum_{i=1}^{h-1}\frac{c_i}{I_i}\right).
\end{flalign}
\begin{figure}[htb]
	\centering
	\includegraphics[width=0.8\linewidth]{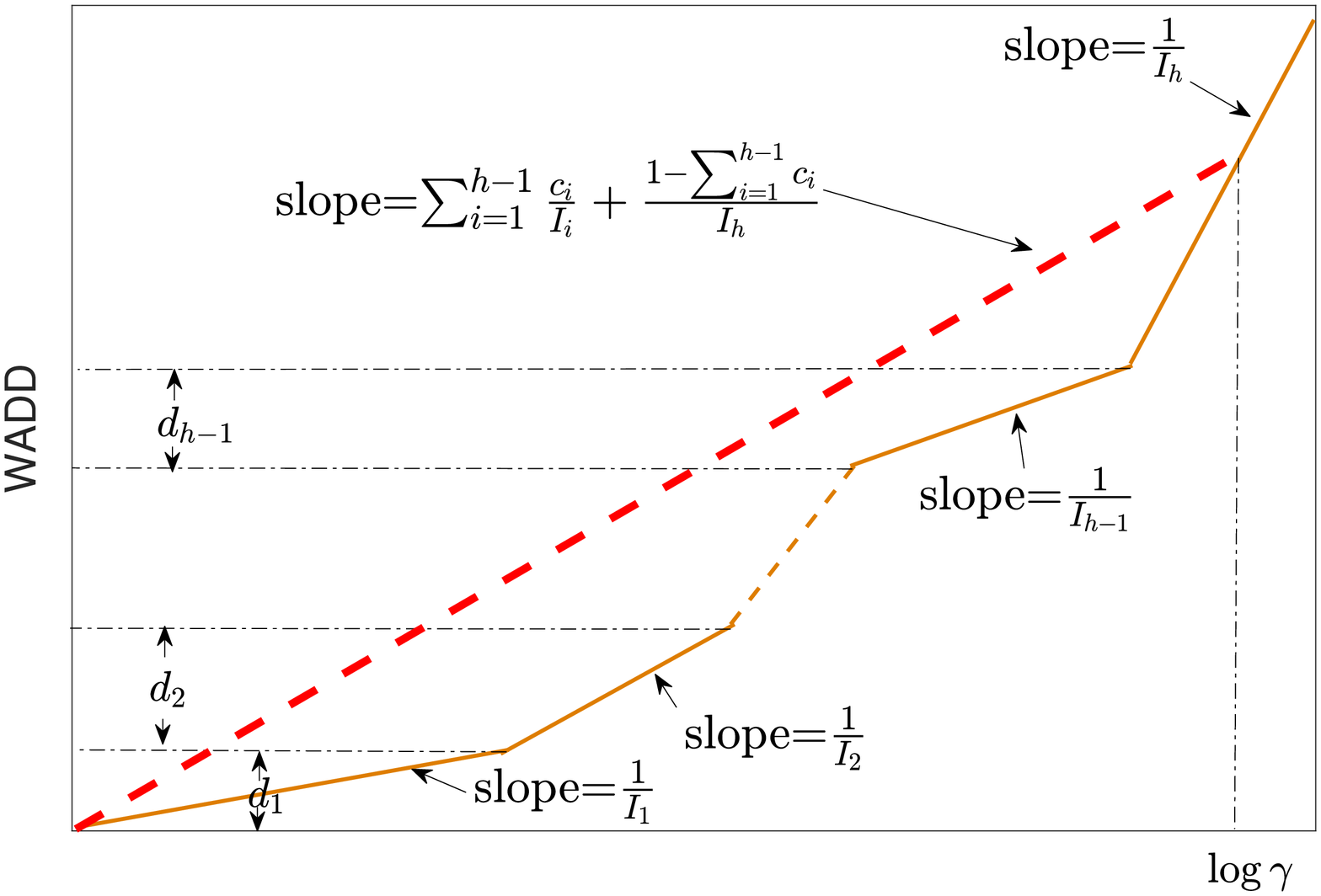}\\
	\caption{A heuristic explanation for the results of the general case with arbitrary $L$ in Theorem \ref{thm:optimalL}. }\label{fig:heuristicL}
\end{figure}

\section{Numerical Studies}\label{sec:numerical}
In this section, we present some numerical results. We focus on the case with $L=2$ to illustrate the performance of the algorithms and demonstrate our theoretical assertions.  Together with the insights gained from the theoretical results, we also propose a heuristic approach to assign  the weights for the WD-CuSum algorithm.

\begin{figure}[htb]
	\centering
	\includegraphics[width=0.8\linewidth]{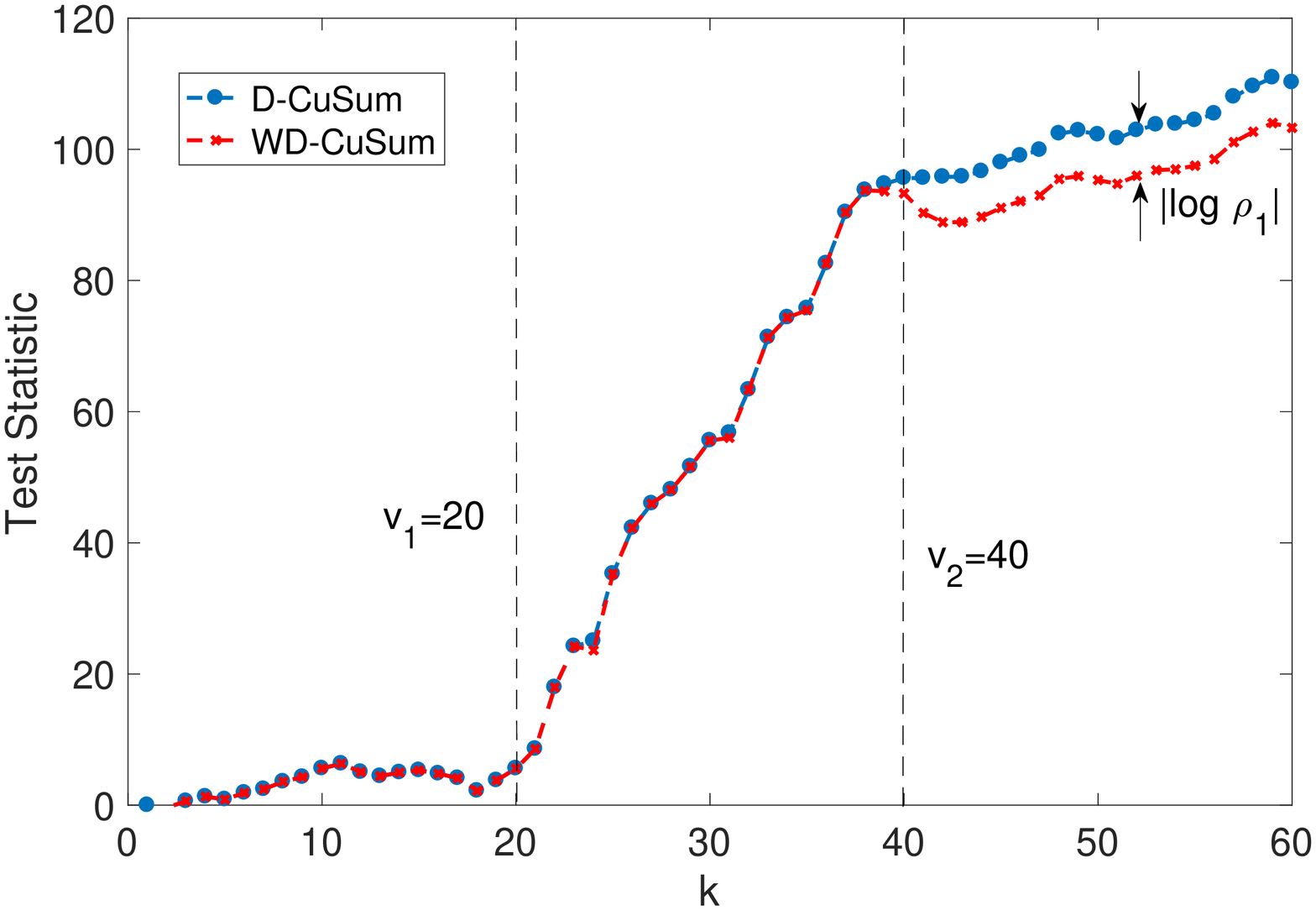}\\
	\caption{Evolution paths of the WD-CuSum and D-CuSum algorithms}\label{fig:evolutionpath}
\end{figure}
In Fig.~\ref{fig:evolutionpath}, we plot the evolution paths of the WD-CuSum and D-CuSum algorithms. We choose $f_0=\mathcal N(0,1)$, $f_1=\mathcal N(3,1)$ and $f_2=\mathcal N(1,1)$. We assume that the change happens at $v_1=20$ and the persistent phase starts at $v_2=40$. We choose $\rho_1={1}/{1000}$ for the WD-CuSum algorithm, which is small enough compared to $I_1$. It can be seen  that the values of both the WD-CuSum and D-CuSum algorithms stay close to zero before the change-point $v_1$ and grow after the change-point $v_1$ with different drifts in the transient and persistent phases. Both algorithms are seen to be adaptive to the unknown transient duration $d_1$. Furthermore, within the transient phase,  the WD-CuSum and D-CuSum algorithms have close evolution paths. After $v_2$, there is a gap of roughly $|\log\rho_1|$ between the two evolution paths. These observations reflect the difference between  the WD-CuSum and D-CuSum algorithms. For the D-CuSum algorithm, the drift is $I_1$ within the transient phase, and $I_2$ within the persistent phase. Recall that for the WD-CuSum algorithm, the drift within the transient phase is reduced from $I_1$ by $|\log(1-\rho_1)|$. Since $\rho_1$ is chosen to be small compared to $I_1$,  the change of drift is not significant in the figure. Furthermore, the value of the WD-CuSum statistic is reduced by $|\log\rho_1|$ within the persistent phase. Therefore, the difference between the values of the D-CuSum and the WD-CuSum statistics is roughly $|\log\rho_1|$ \gf{,} as shown in the figure.

\begin{figure}[htb]
	\centering
	\includegraphics[width=0.8\linewidth]{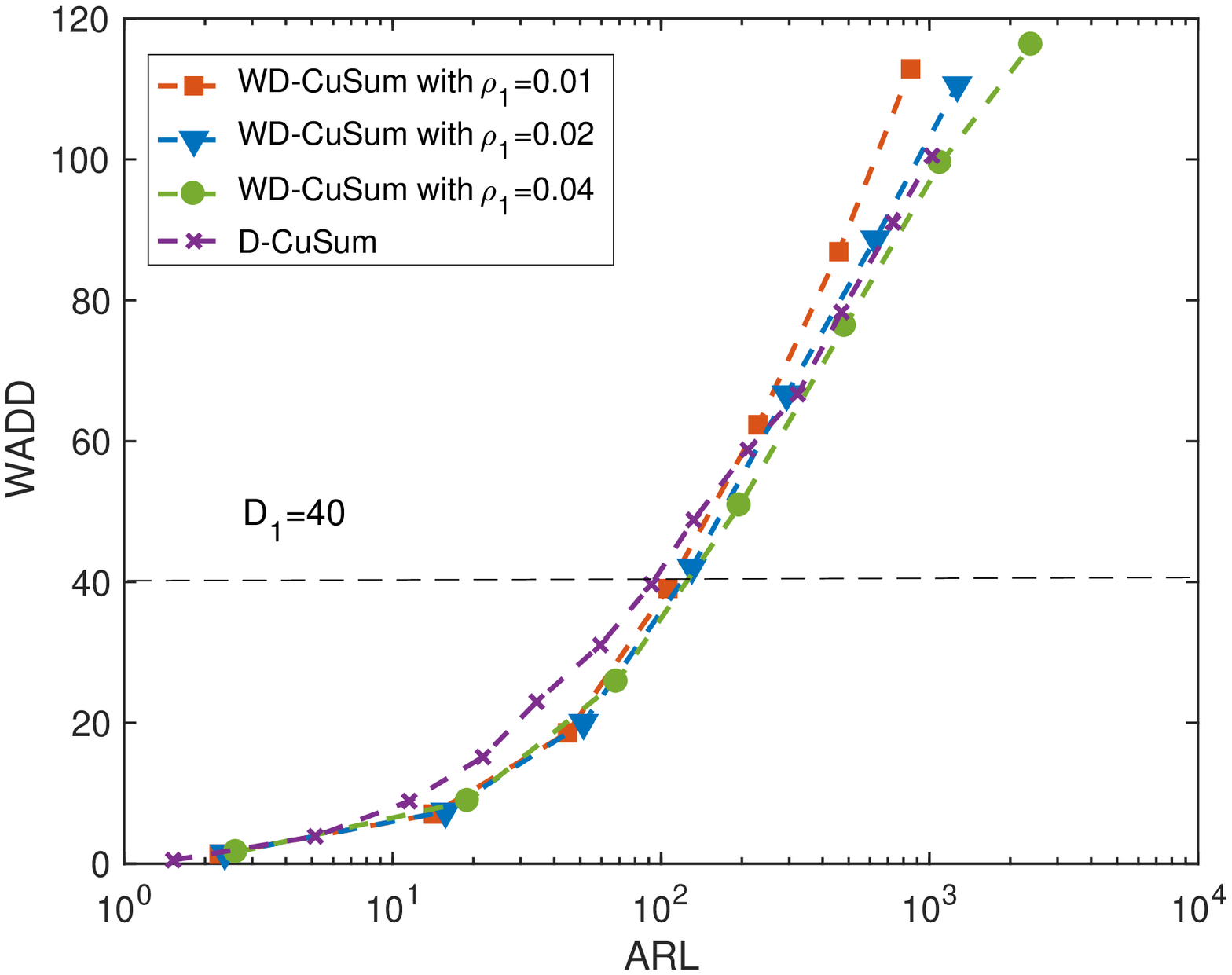}\\
	\caption{WADD versus ARL for the WD-CuSum and D-CuSum algorithms with $d_1=40$.}\label{fig:fig2}
\end{figure}

\begin{figure}[htb]
	\centering
	\includegraphics[width=0.8\linewidth]{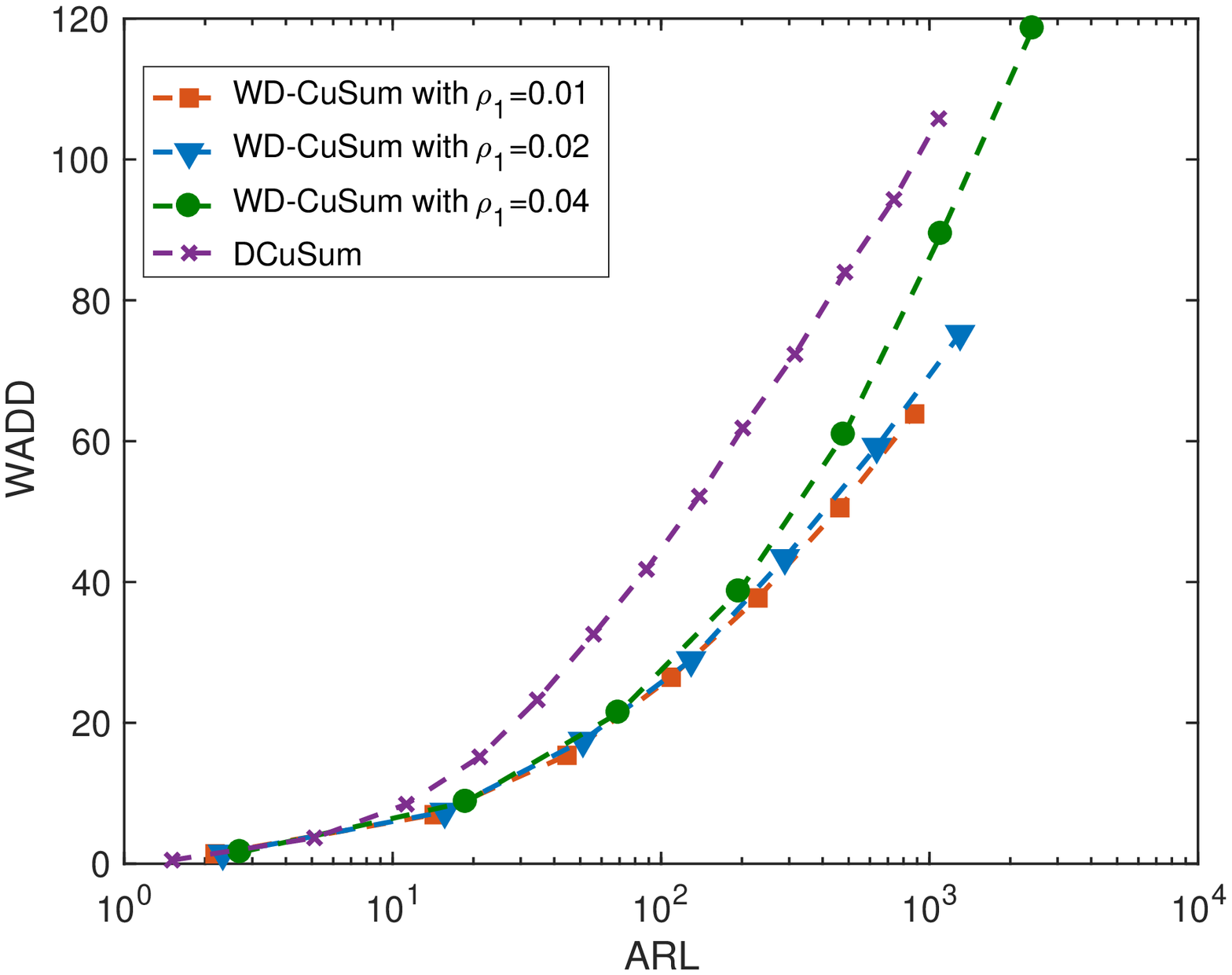}\\
	\caption{WADD versus ARL for the WD-CuSum and D-CuSum algorithms with $d_1=\infty$.}\label{fig:fig3}
\end{figure}

We next compare the performance of the WD-CuSum algorithms with different $\rho_1$ and the D-CuSum algorithm. The goal is to check how different choices of $\rho_1$ affect the performance of the WD-CuSum algorithm relative to the D-CuSum algorithm. We choose $f_0=\mathcal N(0,1)$, $f_1=\mathcal N(0.3,1)$ and $f_2=\mathcal N(-0.3,1)$. For the WD-CuSum algorithm, we consider three different choices of $\rho_1$, i.e., $\rho_1=0.01,0.02$ and $0.04$.
We choose $d_1=40$ and $d_1=\infty$, and plot the WADD versus the ARL in Fig.~\ref{fig:fig2} and Fig.~\ref{fig:fig3}, respectively. Fig.~\ref{fig:fig2} and Fig.~\ref{fig:fig3} show that if the algorithms stop within the transient phase, i.e., WADD$\leq d_1$, the WD-CuSum algorithm has a better performance than the D-CuSum algorithm. Fig.~\ref{fig:fig2} also shows that if the algorithms stop within the persistent phase, i.e., WADD$>d_1$, the D-CuSum and the WD-CuSum algorithms have similar performance.

In Fig.~\ref{fig:fig2}, when the algorithms stop within the persistent phase, i.e., WADD$>d_1$, the WD-CuSum algorithm has a better performance if $\rho_1$ is larger. This is due to the fact that the value of the WD-CuSum statistic is reduced by $|\log\rho_1|$ in the persistent phase, which slows down the detection. With a larger $\rho_1$, this effect is mitigated, which results in better performance for the WD-CuSum algorithm in the persistent phase.

In Fig.~\ref{fig:fig2} and more clearly in Fig~\ref{fig:fig3}, when the algorithms stop within the transient phase, i.e., WADD$\leq d_1$, the WD-CuSum algorithm has a better performance if $\rho_1$ is smaller. This is due to the fact that the drift of the WD-CuSum algorithm is reduced by $|\log(1-\rho_1)|$ in the transient phase, which also slows down the detection. With a smaller $\rho_1$, this effect is reduced, which results in better performance for the WD-CuSum algorithm in the transient phase.

As can be observed in Fig.~\ref{fig:fig2} and Fig.~\ref{fig:fig3}, the performance of the WD-CuSum algorithm depends on the choice of $\rho_1$, but not monotonically. A smaller $\rho_1$ yields a better performance for the WD-CuSum algorithm in the transient phase, and a larger $\rho_1$ yields a better performance for the WD-CuSum algorithm in the persistent phase. However, since $d_1$ is not known in advance, it is not clear in which regime the WD-CuSum algorithm will stop. Therefore, we propose a moderate way to choose $\rho_1$ that balances the performance within the transient  and  persistent phases.

Since the lower bound on the ARL in Theorem \ref{thm:lowerboundarl} does not depend on $\rho_1$, we choose $b\sim\log\gamma.$ We choose $\rho_1$ to be small but not too small such that the WD-CuSum algorithm is robust to the unknown $d_1$, i.e., the WD-CuSum algorithm has a good performance in both the transient and persistent phases.
Recall that the drift within the transient phase is reduced from $I_1$ by $|\log(1-\rho_1)|$. 
From our asymptotic analysis, we would like to have 
\begin{flalign}
\frac{-\log(1-\rho_1)}{I_1}\rightarrow 0, \text{ as } b\rightarrow\infty.
\end{flalign}
Therefore,  we let
\begin{flalign}
-\log(1-\rho_1)\leq \delta_1 I_1,
\end{flalign}
for some  $\delta_1\in(0,1)$,
such that the drift is reduced by a small fraction of $I_1$. Furthermore, within the persistent phase the value of the WD-CuSum statistic is reduced by $|\log\rho_1|$. 
From our asymptotic analysis, we would like to have
\begin{flalign}
\frac{-\log\rho_1}{b}\rightarrow 0, \text{ as } b\rightarrow\infty.
\end{flalign}
Therefore, we let 
\begin{flalign}
-\log\rho_1\leq \delta_2b,
\end{flalign}
for some $\delta_2\in(0,1)$, 
such that $|\log\rho_1|$ is a small perturbation compared to $b$. Therefore, $\rho_1$ is chosen such that 	
\begin{flalign}
e^{-\delta_2b}<\rho_1<1-e^{-\delta_1I_1}.
\end{flalign}
For example, we let $\delta_1=\delta_2=0.3$. Assume that $I_1=0.045$ (as in Fig.~\ref{fig:fig2} and Fig.~\ref{fig:fig3}) and the required ARL is $10^7$. Then we can choose $b=\log(10^7)$ and $\rho_1\in [0.008,0.134]$.

\section{Conclusions}\label{sec:con}
	In this paper, we studied a variant of the QCD problem that arises in a number of engineering applications. Our problem formulation captures the scenarios with transient dynamics after a change. We studied two algorithms for this formulation, the D-CuSum and the WD-CuSum algorithms. We established bounds on the ARL to false alarm for these algorithms that can be used to set the thresholds of these algorithms in application settings. We also established the asymptotic optimality of  the D-CuSum and the WD-CuSum algorithms up to a first-order asymptotic approximation. Both algorithms admit recursions that facilitate implementation and are adaptive to unknown transient dynamics. 

	We have shown that the asymptotic optimal performance follows a  polychotomy as illustrated in Fig.~\ref{fig:heuristicL}. In particular, for the case with only one transient phase, the asymptotic optimal performance follows a dichotomy: if the duration of the transient phase is ``large", then the WADD only depends on the distribution associated with the transient phase; otherwise, the WADD depends on the distributions associated with both the transient and the persistent phases.

	We note that in this paper, our asymptotic analysis is up to a first-order approximation. When the threshold or the transient durations are small, such an approximation may not be precise enough. It is therefore of  interest to develop more accurate approximations for the delay and false alarm rate of the algorithms.
%

	A possible extension of the problem formulation studied in this paper is a generalization to the case where the observations within each transient phase are not i.i.d.\gf{,} as in the observation model studied by Lai \cite{lai1998information}. Another extension is the scenario in which prior statistical knowledge of the change-point and durations of the transients is available. In this case, such prior knowledge should be incorporated into the design of algorithms to improve performance, while taking into account computational efficiency. We also note that the generalization to the case in which the distribution within each transient phase is composite is also of interest in practice, an example of which is the sequentially detection of a propagating event with an unknown propagation pattern in sensor networks \cite{zou2018icassp	}.

\appendix
\noindent {\Large \textbf{Appendix}}
\section{Proof of Proposition \ref{prop:lowerarldcusum}}\label{proof:proposition1}
Under \eqref{eq:conditions}, $\left\{(\widehat W[k])^+\right\}_{k\geq 1}$ is regenerative. Define the following regenerative times:
	\begin{flalign}
	{\sigma_1=\inf\left\{k: (\widehat{W}[k])^+ =0\right\}, (\inf \emptyset=\infty)}
	\end{flalign}
	{and}
	\begin{flalign}
	\sigma_{n+1}=\inf\left\{k>\sigma_n: (\widehat{W}[k])^+ =0\right\},
	\end{flalign}
	for $n\geq 1$. Let
	\begin{flalign}
	N=\inf\bigg\{ n\geq 0: \sigma_n\leq\infty \text{ and } (\widehat{W}[k])^+\geq b\nn\\
	\text{ for some } \sigma_n<k\leq \sigma_{n+1}   \bigg\}   
	\end{flalign}
	denote the index of the cycle in which $(\widehat{W}[k])^+$ crosses $b$.
	Then
	\begin{flalign}\label{eq:28}
	\mE_{\infty}[\hat{\tau}(b)]\geq\mE_{\infty}[N]=\sum_{n=0}^\infty \mP_{\infty}(N\geq n).
	\end{flalign}

For any $m\geq 1$,
\begin{flalign}
&	\mP_\infty(\widehat \tau(b)<Y)\nn\\
&=\mP_\infty(\widehat \tau(b)<Y, Y\leq m)+\mP_\infty(\widehat \tau(b)<Y, Y>m)\nn\\
&\leq \mP_\infty(\widehat \tau(b)< m)+\mP_\infty(Y>m)\nn\\
&\leq m^{L+1}e^{-b} +e^{-\alpha m},
\end{flalign}
where  the last inequality is due to condition \eqref{eq:conditions} and the following fact:
\begin{flalign}
&\mP_\infty(\widehat \tau(b)< m)\nn\\
&=\mP_\infty\left(\max_{1\leq k< m} \widehat W[k]>b\right)\nn\\
&=\mP_\infty\left(\max_{1\leq k < m} \max_{1\leq v_1\leq k}\max_{v_1\leq v_2\leq \cdots\leq v_L\leq k+1}\mg(k,v_1,\bd)>e^b\right)\nn\\
&\overset{(a)}{\leq} \sum_{1\leq k< m} \sum_{1\leq v_1\leq k}\sum_{v_1\leq v_2\leq \cdots\leq v_L\leq k+1}\mP_\infty\left(\mg(k,v_1,\bd)>e^b\right)\nn\\
&\overset{(b)}{\leq} m^{L+1}e^{-b},
\end{flalign}
and $(a)$ is due to the Boole's inequality \cite{durrett2010probability} and $(b)$ is due to Markov's inequality \cite{cover2012elements} and the fact that $\mE_{\infty}[\mg(k,v_1,\bd)]=1$.
By choosing $m={b/\alpha}$, it follows that
\begin{flalign}
&\mP_\infty(\widehat \tau(b)<Y)\leq {e^{-b}}{\left(\left(\frac{b}{\alpha}\right)^{L+1}+1\right)}.
\end{flalign}
Next, 
	\begin{flalign}\label{eq:301}
	&\mP_{\infty}(N\geq n)\nn\\
	&=\mP_{\infty}( (\widehat{W}[k])^+ <b, \forall k\leq \sigma_n)\nn\\
	&=\mP_{\infty}( (\widehat{W}[k])^+ <b, \forall \sigma_{m-1}\leq  k\leq \sigma_m, \forall 1\leq m\leq n)\nn\\
	&=\prod_{m=1}^{n}\mP_{\infty}( (\widehat{W}[k])^+ <b, \forall \sigma_{m-1}\leq  k\leq \sigma_m)\nn\\
	&\geq \left(1-{e^{-b}}{\left(\left(b/\alpha\right)^{L+1}+1\right)}\right)^{n},
	\end{flalign}
	where the last equality is due to the independence among the cycles \cite[Chapter 6.4]{asmussen2008applied}.
	Hence, combining \eqref{eq:28} and \eqref{eq:301}, it follows that
	\begin{flalign}
	\mE_{\infty}[\hat{\tau}(b)]&\geq \sum_{n=0}^\infty \left(1-{e^{-b}}{\left(\left(b/\alpha \right)^{L+1}+1\right)}\right)^{n}\nn\\
	&=\frac{e^{b}}{1+\left(b/\alpha \right)^{L+1}},
	\end{flalign}
	where the last step is due to the fact that for large $b$, ${e^{-b}}{\left(\left(b/\alpha \right)^{L+1}+1\right)}<1$. This concludes the proof.

\section{Proof of Theorem \ref{thm:lowerboundarl}}\label{proof:theorem1}
	For every $k \in \mathbb N$ we have
\begin{flalign}\label{eq:order}
&\widetilde{W}[k] \leq W'[k] \nn\\
&= \max_{1 \leq v_1 \leq k} \log\left(\sum_{\bd\in \mathbb N^{L-1}}\mg(k,v_1,\bd)g(\bd)\right)	\nn\\
&\leq  \log \left(\sum_{v_1=1}^{k} \sum_{\bd\in \mathbb N^{L-1}}\mg(k,v_1,\bd)g(\bd)\right)	\nn\\
&\equiv  \log R[k],
\end{flalign}
where $W'[k]$ is as in \eqref{eq:38}, and the first inequality follows by the construction of the detection statistics.  Note that  $R[k]$ is a mixture Shiryaev-Roberts statistics, and therefore $\{R[k]-k\}_{k\geq 1}$ is a martingale under $\mP_\infty$ \cite{pollak1987average}. Thus, for every $b>0$ and $k \in \mathbb N$ we have by Doob's submartingale inequality \cite{durrett2010probability} that
\begin{flalign}
&\mP_\infty( \widetilde{\tau}(b) \leq k) \nn\\
&=
\mP_\infty \left( \max_{1 \leq s \leq k} \widetilde{W}[s] \geq b \right) \nn\\
&\leq \mP_\infty \left(\max_{1 \leq s \leq k}   R[s] \geq e^b \right)\nn\\
&\leq k   e^{-b},
\end{flalign}
which implies  that
\begin{flalign}
\mE_{\infty}[\widetilde{\tau}(b)] &=\sum_{k=0}^{\infty} \mP_\infty( \widetilde{\tau}(b) >  k) \nn\\
&\geq  \sum_{k=0}^{\infty}
(1-k  e^{-b})^+ \nn\\
&=\sum_{k=0}^{e^b}\left(1-k   e^{-b}\right)\nn\\
&\geq \frac{e^b}{2}.
\end{flalign}

\section{A {Sufficient Condition} for \eqref{eq:conditions}}
\label{remark2} 
	Let
	\begin{flalign}\label{eq:phi}
	\mathrm \Phi(X_j)=\log\left( \frac{\max_{1\leq i\leq L}{f_i(X_j)}}{f_0(X_j)}\right).
	\end{flalign}
	%
	%
	For any $(v_1,\bd,k)$, it follows from \eqref{eq:gamma} and \eqref{eq:phi} that
	\begin{flalign}
	\log \mg(k,v_1,\bd)\leq \sum_{j=v_1}^k\mathrm \Phi(X_j).
	\end{flalign}
	This further implies that
	\begin{flalign}\label{eq:26}
	\widehat W[k]\leq \max_{1\leq v_1\leq k}\sum_{j=v_1}^k \mathrm \Phi(X_j).
	\end{flalign}
	Let $Y'=\inf\left\{k\geq 1: \max_{1\leq v_1\leq k}\sum_{j=v_1}^k \mathrm \Phi(X_j) \leq 0\right\}$. Then by \eqref{eq:26}, $Y' \geq Y$. It then follows that
	\begin{flalign}
	&\mP_{\infty}(Y>m)\nn\\
	&\leq \mP_{\infty}(Y'>m)\nn\\
	&=\mP_{\infty}\left(\max_{1\leq v_1\leq k}\sum_{j=v_1}^k \mathrm \Phi(X_j)> 0, \forall 1\leq k\leq m\right)\nn\\
	&\overset{(a)}{=}\mP_{\infty}\left(\sum_{j=1}^k \mathrm \Phi(X_j)> 0, \forall 1\leq k\leq m\right)\nn\\
	&\leq \mP_{\infty}\left(\sum_{j=1}^m \mathrm\Phi(X_j) > 0\right)\nn\\
	&=\mP_{\infty}\left(\sum_{j=1}^m \bigg(\mathrm \Phi(X_j) -\mE_{\infty}[\mathrm \Phi(X_j)]\bigg)> -m\mE_{\infty}[\mathrm \Phi(X_j)]\right),
	\end{flalign}
	where $(a)$ is by applying the following argument recursively:
	\begin{flalign}
	&\mP_\infty\Bigg(\mathrm \Phi(X_1)>0 \bigcap \bigg(\mathrm \Phi(X_2)>0 \bigcup \mathrm \Phi(X_1)+\mathrm \Phi(X_2)>0\bigg)\Bigg)\nn\\
	&=\mP_\infty\Bigg(\bigg(\mathrm \Phi(X_1)>0 \bigcap \mathrm \Phi(X_2)>0\bigg)\bigcup \nn\\
	&\hspace{0.2\linewidth} \bigg( \mathrm \Phi(X_1)>0\bigcap \mathrm \Phi(X_1)+\mathrm \Phi(X_2)>0\bigg) \Bigg)\nn\\
	&=\mP_\infty\bigg(\mathrm \Phi(X_1)>0\bigcap \mathrm \Phi(X_1)+\mathrm \Phi(X_2)>0 \bigg).
	\end{flalign}
	
	If 
		$
		\mE_{f_0}\left[\mathrm \Phi(X_j)\right]<0,$
		and $$-\alpha=\inf_{t>0} \big(\theta(t)+ t\mE_{f_0}	[\mathrm \Phi(X_j)]\big)<0,$$
		where \begin{flalign}
		\theta(t)=\log\mE_{f_0}\left[\exp\Big(t\big(	\mathrm \Phi(X_j)-\mE_{f_0}	[\mathrm \Phi(X_j)]\big)\Big)\right],
		\end{flalign}
		then by the Chernoff bound \cite{raginsky2013concentration}, \eqref{eq:conditions} holds.

\section{A Useful Lemma}
{We first recall the following  useful corollary of the Strong Law of Large Numbers, which will be used extensively.}
\begin{lemma}\cite[Lemma A.1]{fellouris2017multichannel}\label{lemma:slln}
	Suppose random variables $Y_1,Y_2,\ldots, Y_k$ are i.i.d. on $(\mathrm\Omega,\mathcal F,\mP)$ with $\mE[Y_i]=\mu>0$, and denote $S_k=\sum_{i=1}^kY_i$, then for any $\epsilon>0$, as $n\rightarrow \infty$,
	\begin{flalign}
	\mP\left(\frac{\max_{1\leq k\leq n}S_k}{n}-\mu>\epsilon\right)\rightarrow 0.
	\end{flalign}
\end{lemma}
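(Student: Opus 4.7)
The plan is to upgrade the claim to an almost-sure statement and then read off convergence in probability. By the Strong Law of Large Numbers (SLLN) there is an event $\Omega_0$ with $\mathbb{P}(\Omega_0) = 1$ on which $S_k(\omega)/k \to \mu$ as $k \to \infty$. My goal would be to show that on $\Omega_0$ one in fact has $\max_{1 \leq k \leq n} S_k(\omega)/n \to \mu$ as $n \to \infty$, which is strictly stronger than the one-sided convergence in probability that the lemma asserts.

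For the upper bound, fix $\omega \in \Omega_0$ and $\epsilon > 0$. By the pointwise SLLN there exists $N = N(\omega,\epsilon)$ such that $S_k(\omega) \leq k(\mu + \epsilon/2)$ for every $k \geq N$. Splitting the running maximum at this cut-off,
\[
\max_{1 \leq k \leq n} S_k(\omega) \;\leq\; \max\bigl\{\, M(\omega),\; n(\mu + \epsilon/2) \,\bigr\},
\]
where $M(\omega) := \max_{1 \leq k < N} S_k(\omega)$ is a finite constant depending on $\omega$ and $\epsilon$ but not on $n$. Dividing by $n$ and letting $n \to \infty$ yields $\limsup_n \max_{1 \leq k \leq n} S_k(\omega)/n \leq \mu + \epsilon/2$; since $\epsilon > 0$ was arbitrary, the upper bound is $\mu$.

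For the matching lower bound I would simply use $\max_{1 \leq k \leq n} S_k \geq S_n$, which together with $S_n/n \to \mu$ on $\Omega_0$ gives $\liminf_n \max_{1 \leq k \leq n} S_k/n \geq \mu$. Combining the two bounds shows $\max_{1 \leq k \leq n} S_k/n \to \mu$ almost surely, hence in probability; in particular $\mathbb{P}\bigl(\max_{1 \leq k \leq n} S_k/n - \mu > \epsilon\bigr) \to 0$ for every $\epsilon > 0$, which is the stated assertion.

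I do not anticipate a serious obstacle, since once the pointwise SLLN is invoked the argument is mechanical. The only point that requires care is that the cut-off $N(\omega,\epsilon)$ must be chosen after fixing $\omega \in \Omega_0$, so that $M(\omega)$ is a genuine finite constant independent of $n$ and the splitting bound is valid. The positivity of $\mu$ plays no essential role in this one-sided direction; it mainly ensures, as a matter of intuition, that the running maximum is asymptotically attained near the endpoint $k = n$ rather than at some intermediate index.
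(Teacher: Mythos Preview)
Your argument is correct: the splitting of the running maximum at a pointwise SLLN cut-off, together with the trivial lower bound $\max_k S_k \geq S_n$, yields almost-sure convergence of $n^{-1}\max_{1\leq k\leq n}S_k$ to $\mu$, and the one-sided probability statement follows. The paper does not give its own proof of this lemma; it simply cites it from \cite{fellouris2017multichannel} as a corollary of the Strong Law of Large Numbers, which is precisely the route you take.
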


\section{Proof of Theorem \ref{thm:lower}}\label{app:proofloweradd}
 Recall from \eqref{scaling} that $d_1\sim\frac{c_1\log\gamma}{I_1}$ for some $c_1\in[0,\infty]$. We define $K_\gamma$ as follows:
\begin{flalign}
K_\gamma=\left\{\begin{aligned}
&\frac{\log\gamma}{I_1}, &c_1&\in[1,\infty];\\
&\left(\frac{1-c_1}{I_2}+\frac{c_1}{I_1}\right)\log \gamma, &c_1&\in[0,1).
\end{aligned}\right.
\end{flalign}
Fix  any small enough $\epsilon>0$.
By Markov's inequality, we have
\begin{flalign}
\mE_{v_1}^{d_1}&[\tau-v_1|\tau\geq v_1]\nn\\
&\geq \mP_{v_1}^{d_1}\big(\tau-v_1\geq (1-\epsilon)K_\gamma|\tau\geq v_1\big)(1-\epsilon)K_\gamma.
\end{flalign}
It then suffices to show
\begin{flalign}\label{eq:goal1}
\sup_{\tau\in\mathcal C_\gamma}\mP_{v_1}^{d_1}(\tau-v_1< (1-\epsilon)K_\gamma|\tau \geq v_1)\rightarrow 0 \text{ as } \gamma\rightarrow \infty.
\end{flalign}
We will consider two cases depending on $c_1\geq 1$ or $c_1<1$.

\textbf{\emph{Case 1}}: Consider $c_1\geq 1$. Then $(1-\epsilon)K_\gamma < d_1$ for large $\gamma$.  We first have for every $a>0$,
\begin{small}
\begin{flalign}\label{eq:20}
&\mP_{v_1}^{d_1}(v_1\leq \tau<v_1+(1-\epsilon)K_{\gamma}| \tau\geq v_1)\nn\\
&=  \mP_{v_1}^{d_1}\left(v_1\leq \tau<v_1+(1-\epsilon)K_\gamma, \log\ml_1[v_1,\tau]\geq a\bigg| \tau\geq v_1\right)\nn\\
&\quad +\mP_{v_1}^{d_1}\left(v_1\leq \tau<v_1+(1-\epsilon)K_\gamma, \log\ml_1[v_1,\tau]< a\bigg| \tau\geq v_1\right)\nn\\
&\leq \mP_{v_1}^{d_1}\left( \max_{0\leq j< (1-\epsilon)K_\gamma}\log\ml_1[v_1,v_1+j]\geq a\bigg| \tau\geq v_1\right)\nn\\
&\quad +\mP_{v_1}^{d_1}\left(v_1\leq \tau<v_1+(1-\epsilon)K_\gamma, \ml_1[v_1,\tau]< e^a\bigg| \tau\geq v_1\right)\nn\\
&\overset{(a)}{=}\mP_{v_1}^{d_1}\left( \max_{0\leq j< (1-\epsilon)K_\gamma}\log\ml_1[v_1,v_1+j]\geq a\right)\nn\\
&\quad +\mP_{v_1}^{d_1}\left(v_1\leq \tau<v_1+(1-\epsilon)K_\gamma, \ml_1[v_1,\tau]< e^a\bigg| \tau\geq v_1\right),
\end{flalign}
\end{small}
where $(a)$ is due to the fact that $\log\ml_1[v_1,v_1+j]$ is independent of $X_1,\ldots,X_{v_1-1}$, $\forall 0\leq j< (1-\epsilon)K_\gamma$, and the fact that the event $\{\tau\geq v_1\}$ only depends on the random variables $X_1,X_2,\ldots,X_{v_1-1}$.

By changing the measure  $\mP^{d_1}_{v_1}$ to $\mP_\infty$ \cite[Proof of Theorem 7.1.3]{tartakovsky2014sequential}, it follows that
\begin{flalign}\label{eq:changemeasure1}
&\mP^{d_1}_{v_1}\left(v_1\leq \tau<v_1+(1-\epsilon)K_\gamma, \ml_1[v_1,\tau]\leq e^a\right)\nn\\
&\leq e^{a}\mE^{d_1}_{v_1}\left[\mathds{1}_{\{v_1\leq \tau<v_1+(1-\epsilon)K_\gamma, \ml_1[v_1,\tau]\leq e^a\}} \frac{1}{\ml_1[v_1,\tau]}\right]\nn\\
&\leq e^{a}\mE^{d_1}_{v_1}\left[\mathds{1}_{\{v_1\leq \tau<v_1+(1-\epsilon)K_\gamma\}} \frac{1}{\ml_1[v_1,\tau]}\right]\nn\\
&=e^{a}\mE_{\infty}\left[\mathds{1}_{\{v_1\leq \tau<v_1+(1-\epsilon)K_\gamma\}}\right]\nn\\
&=e^{a}\mP_{\infty}(v_1\leq \tau<v_1+(1-\epsilon)K_\gamma).
\end{flalign}
The event $\{\tau\geq v_1\}$ only depends on the random variables $X_1,X_2,\ldots,X_{v_1-1}$ that follow the same distribution $f_0$ under both $\mP_{\infty}$ and $\mP^{d_1}_{v_1}$. This implies  that
\begin{flalign}
\mP^{d_1}_{v_1}(\tau\geq v_1)=\mP_{\infty}(\tau\geq v_1).
\end{flalign}
It then follows from \eqref{eq:changemeasure1} that
\begin{flalign}\label{eq:19}
&  \mP^{d_1}_{v_1}\left(v_1\leq \tau<v_1+(1-\epsilon)K_\gamma,\ml_1[v_1,\tau]\leq e^a\big|\tau\geq v_1\right)\nn\\
&\leq e^{a}\mP_{\infty}(v_1\leq \tau<v_1+(1-\epsilon)K_\gamma|\tau\geq v_1).
\end{flalign}
Combining \eqref{eq:20} and \eqref{eq:19} yields that
\begin{flalign}\label{eq:27}
\mP&_{v_1}^{d_1}(v_1\leq \tau<v_1+(1-\epsilon)K_{\gamma}| \tau\geq v_1)\nn\\
&\leq e^{a} \mP_{\infty}(v_1\leq \tau<v_1+(1-\epsilon)K_\gamma| \tau\geq v_1)\nn\\
&\quad +\mP^{d_1}_{v_1}\left(\max_{0\leq j< (1-\epsilon)K_\gamma}\log\ml_1[v_1,v_1+j]\geq a\right).
\end{flalign}

Since $\mE_{\infty}[\tau]\geq \gamma$, then for each $m<\gamma$, there exists some $v_1\geq 1$, such that
\begin{flalign}\label{eq:condition1}
\mP_{\infty}(\tau\geq v_1)>0\text{ and }\mP_\infty (\tau<v_1+m|\tau\geq v_1)\leq \frac{m}{\gamma},
\end{flalign}
which can be shown by contradiction as in \cite[Theorem 1]{lai1998information}. Hence, for $m=(1-\epsilon)K_\gamma$, there exists $v_1$ such that
\begin{flalign}
\mP_{\infty}(v_1\leq \tau<v_1+(1-\epsilon)K_\gamma|\tau\geq v_1)\leq \frac{(1-\epsilon)K_\gamma}{\gamma}.
\end{flalign}
Set $a=(1-\epsilon^2)\log \gamma$, then
\begin{flalign}\label{eq:30}
&e^{a}\mP_{\infty}(v_1\leq \tau<v_1+(1-\epsilon)K_\gamma| \tau\geq v_1)\nn\\
&\leq \gamma^{1-\epsilon^2}\frac{(1-\epsilon)\log\gamma}{\gamma I_1}\rightarrow 0 \text{, as } \gamma\rightarrow\infty.
\end{flalign}

We next show that the second term in \eqref{eq:27} converges to zero as $\gamma\rightarrow\infty$. Because $c_1\geq 1$, for large $\gamma$, $d_1> (1-\epsilon)K_\gamma$, such that $X_j$, for $v_1\leq j< v_1+(1-\epsilon)K_\gamma$, are i.i.d. generated by $f_1$. Therefore, $Z_1(X_j)$, for $v_1\leq j< v_1+(1-\epsilon)K_\gamma$, are also i.i.d. with expectation $I_1$.
Rewrite $a=(1-\epsilon^2)\log \gamma=(1-\epsilon)K_\gamma I_1 (1+\epsilon)$, then
\begin{flalign}\label{eq:31}
&\mP^{d_1}_{v_1}\left(\max_{0\leq k< (1-\epsilon)K_\gamma}\log\ml_1[v_1,v_1+k]\geq a\right)\nn\\
&=
\mP_{v_1}^{d_1}\left(\max_{0\leq k< (1-\epsilon)K_\gamma} \sum_{j=v_1}^{v_1+k}Z_1(X_j)\geq a\right)\nn\\
&=\mP_{v_1}^{d_1}\left(\frac{\underset{0\leq k< (1-\epsilon)K_\gamma}{\max} \sum_{j=v_1}^{v_1+k}Z_1(X_j)}{(1-\epsilon)K_\gamma} -I_1\geq I_1\epsilon\right)\nn\\
&\rightarrow 0, \text{ as } \gamma\rightarrow \infty,
\end{flalign}
where the last step is by Lemma \ref{lemma:slln} and the fact that $Z_1(X_j)$, for $v_1\leq j <v_1+(1-\epsilon)K_\gamma$, are i.i.d. with expectation $I_1$.

Combining \eqref{eq:27}, \eqref{eq:30} and \eqref{eq:31} yields
\begin{flalign}
\mP^{d_1}_{v_1}(\tau-v_1<(1-\epsilon)K_\gamma|\tau\geq v_1)\rightarrow 0, \text{ as  }\gamma\rightarrow\infty.
\end{flalign}

\textbf{\emph{Case 2}}: Consider $c_1< 1$. 
Note that for any $c_1<1$, we have a small enough $\epsilon$ such that
\begin{flalign}
(1-\epsilon) \left(\frac{c_1}{I_1}+\frac{1-c_1}{I_2}\right)> \frac{c_1}{I_1}.
\end{flalign}
It then follows that $(1-\epsilon)K_\gamma-d_1\rightarrow \infty$ as $\gamma\rightarrow \infty$.

By a change-of-measure argument similar to case 1,
we obtain for any $a'>0$,
\begin{flalign}\label{eq:goal3}
\mP^{d_1}_{v_1}&(\tau<v_1+(1-\epsilon)K_\gamma|\tau\geq v_1)\nn\\
&\leq e^{a'}\mP_\infty(\tau<v_1+(1-\epsilon)K_\gamma|\tau\geq v_1)\nn\\
&\quad+\mP^{d_1}_{v_1}\left(\max_{0\leq j< (1-\epsilon)K_\gamma}   \log\mg(v_1+j,v_1,d_1) >a' \right).
\end{flalign}

Set 
\begin{flalign}\label{eq:a'}
a'=(1-\epsilon_1)\log \gamma,
\end{flalign}
 where $\epsilon_1=\frac{(1-c_1)\epsilon}{2}$, and let $m=(\frac{1-c_1}{I_2}+\frac{c_1}{I_1})(1-\epsilon)\log\gamma$ in \eqref{eq:condition1}.
Then, there exists $v_1$, such that as $\gamma\rightarrow \infty$
\begin{flalign}
e^{a'}\mP_\infty(T<v_1+(1-\epsilon)K_\gamma|T\geq v_1)\leq &\frac{(1-\epsilon)K_\gamma}{\gamma^{\epsilon_1}}\rightarrow 0. 
\end{flalign}

We next show that the second term in \eqref{eq:goal3} converges to zero as $\gamma\rightarrow \infty$.
It can be shown that
\begin{flalign}\label{eq:35}
&\max_{0\leq j< (1-\epsilon)K_\gamma}   \log\mg(v_1+j,v_1,d_1) \nn\\
&=\max_{0\leq j< (1-\epsilon)K_\gamma}\left(\sum_{k=v_1}^{v_1+\min\{d_1-1,j\}}Z_1(X_k)  +\sum_{k=v_1+d_1}^{v_1+j}Z_2(X_k)  \right)  \nn\\
&\leq \max_{0\leq j< (1-\epsilon)K_\gamma}\sum_{k=v_1}^{v_1+\min\{d_1-1,j\}}Z_1(X_k)  \nn\\
&\quad+ \max_{0\leq j< (1-\epsilon)K_\gamma}\sum_{k=v_1+d_1}^{v_1+j}Z_2(X_k)    \nn\\
%
&\overset{(a)}{=} \max_{0\leq j\leq d_1-1} \sum_{k=v_1}^{v_1+j}Z_1(X_k)   + 
\max_{d_1-1\leq j< (1-\epsilon)K_\gamma}\sum_{k=v_1+d_1}^{v_1+j}Z_2(X_k)\nn\\
&= \max_{0\leq j\leq d_1-1} \sum_{k=v_1}^{v_1+j}Z_1(X_k)   \nn\\
&\quad+ 
\max_{0\leq j< (1-\epsilon)K_\gamma-d_1+1}\sum_{k=v_1+d_1}^{v_1+d_1+j-1}Z_2(X_k),
\end{flalign}
where $(a)$ is due to the fact that if $j>d_1-1$, $\min\{d_1-1,j\}=d_1-1$, and  the fact that if $j<d_1$, $\sum_{k=v_1+d_1}^{v_1+j}Z_2(X_k)=0$.
By definition of $a'$ in \eqref{eq:a'},
\begin{flalign}
&a'=(1-\epsilon_1)\log\gamma \geq  E_1+E_2,
\end{flalign}
where 
\begin{flalign}
	E_1=&\left(1+\epsilon_1\right) c_1\log\gamma\nn\\
	\sim&(1+\epsilon_1)d_1I_1,\nn\\
	E_2=&\left( (1-\epsilon) \left(\frac{c_1}{I_1}+\frac{1-c_1}{I_2}\right)-\frac{c_1}{I_1}\right) I_2\log\gamma\left(1+\epsilon_1\right)\nn\\
	\sim&(1+\epsilon_1) \big( (1-\epsilon)K_\gamma-d_1\big)I_2.
\end{flalign}
Then,
\begin{flalign}\label{eq:36}
&\mP^{d_1}_{v_1}\left(\max_{0\leq j< (1-\epsilon)K_\gamma}   \log\mg(v_1+j,v_1,d_1) >a' \right)\nn\\
&\leq \mP_{v_1}^{d_1}\bigg(      \max_{1\leq j\leq d_1} \sum_{k=v_1}^{v_1+j-1}Z_1(X_k)   \nn\\
&\quad\quad\quad +  \max_{0\leq j< (1-\epsilon)K_\gamma-d_1+1}\sum_{k=v_1+d_1}^{v_1+d_1+j-1}Z_2(X_k)       >a' \bigg)\nn\\
&\leq \mP_{v_1}^{d_1}\bigg(      \max_{1\leq j\leq d_1} \sum_{k=v_1}^{v_1+j-1}Z_1(X_k)   \nn\\
&\quad\quad\quad +  \max_{0\leq j< (1-\epsilon)K_\gamma-d_1+1}\sum_{k=v_1+d_1}^{v_1+d_1+j-1}Z_2(X_k)       >E_1+E_2 \bigg)\nn\\
&\overset{(a)}{\leq} \mP_{v_1}^{d_1}\bigg(      \max_{1\leq j\leq d_1} \sum_{k=v_1}^{v_1+j-1}Z_1(X_k) >E_1\bigg)\nn\\
&\quad+ \mP_{v_1}^{d_1}\bigg( \max_{0\leq j< (1-\epsilon)K_\gamma-d_1+1}\sum_{k=v_1+d_1}^{v_1+d_1+j-1}Z_2(X_k)  > E_2\bigg)\nn\\
&\overset{(b)}{\rightarrow}0, \text{ as }\gamma\rightarrow\infty,
\end{flalign}
where $(a)$ is due to the fact that $\mP(Y_1+Y_2>y_1+y_2)\leq  \mP(Y_1>y_1) + \mP(Y_2>y_2)$ for any random variables $Y_1,Y_2$ and constants $y_1,y_2$,  and $(b)$ is due to Lemma \ref{lemma:slln}. This completes the proof.

\begin{figure*}[!h]
	\begin{flalign}\label{eq:wk}
	w[k_1,k_2,v_2]=\left\{\begin{aligned}
	&\log \frac{\left(\prod_{j=k_1}^{\min\{v_2-1,k_2\}}f_1(X_j)(1-\rho_1)\right)   \rho_1^{\mathds{1}_{\{k_2\geq v_2\}}} \prod_{j=v_2}^{k_2}f_2(X_j)}{\prod_{j=k_1}^{k_2}f_0(X_j)} , \text{ if } k_1\leq v_2,\\
	&\log \frac{\rho_1 \prod_{j=k_1}^{k_2}f_2(X_j)}{\prod_{j=k_1}^{k_2}f_0(X_j)} , \text{ if } k_1> v_2,\\
	\end{aligned}	
	\right.
	\end{flalign}
\end{figure*}

\section{Proof of Theorem \ref{thm:upper}}\label{app:upperbound}
We first show the asymptotic upper bound on the WADD for the WD-CuSum algorithm. Then the results for the D-CuSum algorithm naturally follows from \eqref{eq:comp}.

For notational convenience, define $w[k_1, k_2,v_2]$ as in \eqref{eq:wk},
i.e., $w[k_1,k_2,v_2]$ is the logarithm of the weighted likelihood ratio of the samples $X_{k_1},\ldots,X_{k_2}$ with the change-point $v_1=1$ and the starting point of the persistent phase being $v_2$.

We further note that the test statistic in \eqref{eq:wdcusum_test} is equivalent to
\begin{flalign}
(\widetilde W[k])^+=\max_{1\leq k_1\leq v_2\leq k+1} w[k_1,k,v_2].
\end{flalign}
Due to the Markov property and the recursive structure of $\{\widetilde \mo^{(1)}[k],\widetilde \mo^{(2)}[k]\}_{k\geq 1}$, it is clear that the WADD is achieved when $v_1=1$, i.e.,
\begin{flalign}\label{eq::3}
J^{d_1}_\text{L}(\widetilde \tau(b)) &=  J^{d_1}_\text{P}(\widetilde \tau(b)) = \mE_1^{d_1} [\widetilde \tau(b)].
\end{flalign}
It then suffices to upper bound  $\mE^{d_1}_1[\widetilde \tau(b)]$.	
When $\rho_1\rightarrow 0$ and $\frac{\log \rho_1}{b}\rightarrow 0$ as $b\rightarrow \infty$ and by the fact that $d_1\sim c_1'{b}/{I_1}$, we have
\begin{flalign}
{d_1}\sim c_1'{\frac{b}{I_1+\log(1-\rho_1)}}.
\end{flalign}

Depending on the value of $c_1'$, we bound $\mE^{d_1}_1[\widetilde \tau(b)]$ in the following two cases.

\textbf{\emph{Case 1:}} Consider $c_1'>1$. Our goal is to show that as $b\rightarrow \infty$,
\begin{flalign}
\mE^{d_1}_1[\widetilde \tau(b)]\leq \frac{b}{I_1}(1+o(1)).
\end{flalign}

In the following, we choose $\epsilon>0$ such that $1<\frac{1+\epsilon}{1-\epsilon}\leq c_1'$, i.e.,  $\frac{c_1'(1-\epsilon)}{1+\epsilon} \geq1$, and denote 
\begin{flalign}
n_b&=\frac{b(1+\epsilon)}{I_1+\log(1-\rho_1)},\\
c_\epsilon&=\left\lfloor c_1'\frac{(1-\epsilon)}{1+\epsilon}\right\rfloor.
\end{flalign}

We first have
\begin{flalign}\label{eq:109}
&\mE^{d_1}_1\left[\frac{\widetilde \tau(b)}{n_b}\right]\nn\\
&=\int_0^\infty \mP^{d_1}_1\left(\frac{\widetilde \tau(b)}{n_b}>x\right)dx\nn\\
&\leq \sum_{i=0}^\infty \mP^{d_1}_1\left({\widetilde \tau(b)}> {n_bi}\right)\nn\\
&=1+\sum_{i=1}^{c_\epsilon} \mP^{d_1}_1\left({\widetilde \tau(b)}> n_bi\right)+\sum_{i=c_\epsilon+1}^\infty \mP^{d_1}_1\left({\widetilde \tau(b)}> {n_bi}\right).
\end{flalign}
It then suffices to bound $\mP^{d_1}_1\left(\widetilde \tau(b)> n_bi\right)$ for the two regimes, $i\leq c_\epsilon$ and $i>  c_\epsilon$. We note that the event $\{{\widetilde \tau(b)}> {n_bi}\}$ only depends on the samples $X_1,\ldots,X_{n_bi}$. 

For $1\leq i\leq c_\epsilon$, $X_1,\ldots,X_{n_bi}$ are i.i.d. generated from $f_1$ under $\mP^{d_1}_1$.
Therefore,
\begin{flalign}\label{eq:41}
&\mP^{d_1}_1\left(\widetilde \tau(b)> n_bi\right)\nn\\
&=\mP^{d_1}_1\left( \max_{1\leq k\leq n_b i} (\widetilde W[k])^+\leq b\right)\nn\\
&=\mP^{d_1}_1\left(\max_{1\leq k\leq n_b i}    \hspace{0.1cm}   \max_{1\leq k_1\leq v_2\leq k+1} w[k_1,k,v_2] \leq b        \right)\nn\\
&\leq \mP^{d_1}_1\left(w[(u-1)n_b+1,u n_b,d_1+1]\leq b, \forall 1\leq u\leq i\right)\nn\\
&= \mP^{d_1}_1\left( \sum_{j=(u-1)n_b+1}^{u n_b}   \hspace{-0.06\linewidth}\big(Z_1(X_j)+\log(1-\rho_1)\big)\leq b, \forall 1\leq u\leq i\right)\nn\\
&\overset{(a)}{=}\prod_{u=1}^{i}\mP^{d_1}_1\left(\frac{1}{n_b} \sum_{j=(u-1)n_b+1}^{u n_b}  \hspace{-0.06\linewidth} \big(Z_1(X_j)+\log(1-\rho_1)\big)\leq \frac{b}{n_b}\right)\nn\\
&\overset{(b)}{\leq} \delta^i,
\end{flalign}
where $\delta$ can be arbitrarily small for large $b$, $(a)$ is due to the fact that $\{X_{1+(u-1)n_b},\ldots,X_{un_b}\}$ are independent from $\{X_{1+(u'-1)n_b},\ldots,X_{u'n_b}\}$ for any $u\neq u'$, and $(b)$ is by the Weak Law of Large Numbers.


For $i>c_\epsilon$, $n_bi>d_1$ for large $b$, then the samples $X_1,\ldots,X_{n_bi}$ are generated from different distributions, either $f_1$ or $f_2$.
We then define 
\begin{flalign}
t=\left\lceil \frac{I_1}{\min\{I_1,I_2\}}\right\rceil+1.
\end{flalign}
We note that $t$ is a constant that only depends on $I_1$ and $I_2$.


\begin{figure}[htb]
	\centering
	\includegraphics[width=0.6\linewidth]{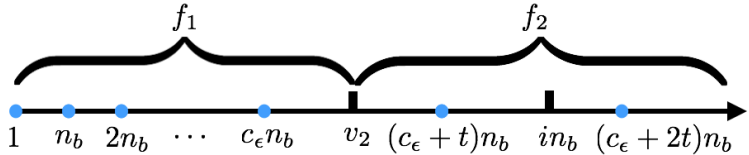}\\
	\caption{Illustration of partitioning of samples into blocks. Here $c_\epsilon +t\leq i <c_\epsilon+2t$. We partition the samples up to $in_b$ into $c_\epsilon$ blocks with size $n_b$, and one block with size $tn_b$. Then, the probability that the sum of log-likelihood of the samples within each block is less than $b$ is asymptotically small by the Weak Law of Large Numbers. The choice of block size $tn_b$ for the samples after $c_\epsilon n_b$ is due to the fact that the samples are generated from $f_1$ and $f_2$, and the need to guarantee an asymptotically small probability that the sum of log-likelihood of the samples within this block is less than $b$. }\label{fig:fig4}
\end{figure}

Consider any $i$ such that
$c_\epsilon+ (\ell-1) t   \leq i \leq c_\epsilon + \ell t -1$, for any $\ell\geq 1$, then
\begin{flalign}\label{eq:81}
\mP^{d_1}_1\left(\widetilde \tau(b)> n_bi\right)
&=\mP^{d_1}_1\left( \max_{1\leq k\leq n_b i} (\widetilde W[k])^+\leq b\right)\nn\\
&\leq \mP^{d_1}_1\left(A\cap B\right)\nn\\
&=\mP^{d_1}_1\left(A\right)\mP^{d_1}_1\left( B\right),
\end{flalign}
where
\begin{flalign}
A&=\left\{ w\left[1+(u-1)n_b,un_b,d_1+1\right] \leq b, \forall 1\leq u\leq c_\epsilon\right\},\\
B&=\big\{w\left[      \left( c_\epsilon + (u-1)t\right)  n_b+1,\left( c_\epsilon+ ut\right)  n_b           , d_1+1\right]  \leq b, \nn\\
&\quad\quad\quad \forall  1\leq u\leq \ell-1 \big\},
\end{flalign}
and the last equality is due to the fact that the events $A$ and $B$ are independent.  See Fig.~\ref{fig:fig4} for an illustration of partitioning the samples up to $n_bi$ into blocks with different sizes.

Similarly to \eqref{eq:41}, we obtain that
\begin{flalign}\label{eq:111}
\mP^{d_1}_1(A)\leq \delta ^ {c_\epsilon}.
\end{flalign}
Furthermore, by the Weak Law of Large Numbers, $\forall  1\leq u\leq \ell-1$, we have that as $b\rightarrow \infty$,
\begin{flalign}
\frac{w\left[      \left( c_\epsilon + (u-1)t\right)  n_b+1,\left( c_\epsilon+ ut\right)  n_b           , d_1+1\right] }{tn_b} \overset{p.}{\longrightarrow} I_2.
\end{flalign}
As $b\rightarrow \infty$,
\begin{flalign}
&\frac{w\left[      \left( c_\epsilon + (u-1)t\right)  n_b+1,\left( c_\epsilon+ ut\right)  n_b           , d_1+1\right] }{b} \nn\\
&\overset{p.}{\longrightarrow} \frac{I_2t(1+\epsilon)}{I_1}\geq 1+\epsilon.
\end{flalign}
Thus, 
\begin{flalign}
\mP^{d_1}_1 \left(w\left[      \left( c_\epsilon + (u-1)t\right)  n_b+1,\left( c_\epsilon+ ut\right)  n_b           , d_1+1\right]  \leq b\right)\leq \delta,
\end{flalign}
where $\delta$ can be arbitrarily small for large $b$. Then, it follows from similar arguments of independence  that
\begin{flalign}\label{eq:85}
\mP^{d_1}_1(B)\leq \delta^{ \ell-1}.
\end{flalign}

Combining \eqref{eq:111} and \eqref{eq:85} further implies that
\begin{flalign}\label{eq:117}
\mP^{d_1}_1&\left(\widetilde \tau(b)> n_bi\right)\leq \delta ^ {c_\epsilon+\ell-1}.
\end{flalign}

Hence, by \eqref{eq:109}, \eqref{eq:41} and \eqref{eq:117}, we have
\begin{flalign}
\mE^{d_1}_1\left[\frac{\widetilde \tau(b)}{n_b}\right]
&\leq \sum_{i=0}^{c_\epsilon} \delta^i  +\sum_{\ell=1}^{\infty}t\delta ^ {c_\epsilon+\ell-1}\nn\\
&=\frac{1}{1-\delta} + t\delta^{c_\epsilon} +(t-1)\delta^{c_\epsilon+1}\frac{1}{1-\delta}\nn\\
& \overset{\Delta}{=} 1+\delta',
\end{flalign}
where $\delta'$ can be arbitrarily small for large $b$ due to the facts that $c_\epsilon\geq 1$ and $\delta$ can be arbitrarily small for large $b$.
Therefore, as $b\rightarrow \infty$,
\begin{flalign}
\mE^{d_1}_1[\widetilde \tau(b)]\leq \frac{b}{I_1}(1+o(1)).
\end{flalign}

\textbf{\emph{Case 2:}} If $c_1' \leq 1$, our goal is to show that as $b\rightarrow \infty$,
\begin{flalign}
\mE^{d_1}_1[\widetilde \tau(b)]\leq b\left(\frac{c_1'}{I_1}+\frac{1-c_1'}{I_2}\right)(1+o(1)).
\end{flalign}


Let 
\begin{flalign}
n_b'&=\left(d_1+\frac{b-\log\rho_1-d_1(I_1+\log(1-\rho_1))}{I_2}\right)(1+\epsilon),\nn\\
&\sim b\left(\frac{c_1'}{I_1}+\frac{1-c_1'}{I_2}\right)(1+\epsilon).
\end{flalign} 
Then, we have
\begin{flalign}
\lim_{b\rightarrow\infty} \frac{n_b'}{d_1}=\left(1+\left(\frac{1}{c_1'}-1\right)\frac{I_1}{I_2}\right)(1+\epsilon)> 1,
\end{flalign}
which implies that for large $b$, $n_b'>d_1$, and $n_b'-d_1\rightarrow\infty$ as $b\rightarrow \infty$. 

To bound $\mE^{d_1}_1[\widetilde \tau(b)]$, we first obtain
\begin{flalign}
\mE^{d_1}_1\left[ \frac{\widetilde \tau(b)}{n_b'}\right]&\leq \sum_{i=0}^{\infty}\mP^{d_1}_1\left(\widetilde \tau(b) >  n_b' i\right)\nn\\
&=1+\sum_{i=1}^{\infty}\mP^{d_1}_1\left(\widetilde \tau(b) >  n_b' i\right).
\end{flalign}

If $i=1$,
\begin{flalign}\label{eq:94}
&\mP^{d_1}_1\left(\widetilde \tau(b) > n_b'\right)\nn\\
&=\mP^{d_1}_1\left(\max_{1\leq k\leq n_b'}(\widetilde W[k])^+\leq b\right)\nn\\
&\leq \mP^{d_1}_1\left( w\left[1,n_b',d_1+1\right]\leq b  \right)\nn\\
&=\mP^{d_1}_1\Bigg( \sum_{j=1}^{d_1}\bigg(Z_1(X_j)+\log(1-\rho_1)\bigg) +\log\rho_1\nn\\
&\quad\quad\quad +\sum_{j=d_1+1}^{n_b'}Z_2(X_j)  \leq b\Bigg)\nn\\
&=\mP^{d_1}_1\Bigg( \sum_{j=1}^{d_1}\bigg(Z_1(X_j)+\log(1-\rho_1)\bigg) +\sum_{j=d_1+1}^{n_b'}Z_2(X_j)  \nn\\
&\hspace{0.1\linewidth}\leq d_1(I_1+\log(1-\rho_1))   +  (n_b'-d_1)I_2 -\epsilon C\Bigg)\nn\\
&\overset{(a)}{\leq}\mP^{d_1}_1\Bigg(\sum_{j=1}^{d_1}\bigg(Z_1(X_j)+\log(1-\rho_1)\bigg)  \nn\\
&\quad\quad\quad\quad\quad\leq d_1(I_1+\log(1-\rho_1)) -\frac{\epsilon C}{2}\Bigg)\nn\\
&\quad+\mP^{d_1}_1\left(\sum_{j=d_1+1}^{n_b'}Z_2(X_j) \leq (n_b'-d_1)I_2 -\frac{\epsilon C}{2}\right)\nn\\
&\overset{(b)}{\leq} \delta,
\end{flalign}
where $C=d_1I_2+b-d_1(I_1+\log(1-\rho_1))-\log\rho_1$, $\delta$ can be arbitrarily small for large $b$, $(a)$ is due to the fact that for any random variables $X,Y$ and constants $x,y$, $\mP(X+Y\leq x+y)\leq \mP(X\leq x)+\mP(Y\leq y)$, and $(b)$ is due to the Weak Law of Large Numbers.

Define \begin{flalign}
t'=\left\lceil  \frac{1}{\left(\frac{c_1'}{I_1}+\frac{1-c_1'}{I_2}\right)\min\{I_1,I_2\}}\right\rceil+1,
\end{flalign}
 which only depends on $c_1'$, $I_1$ and $I_2$.
Following arguments similar to those in \eqref{eq:81}-\eqref{eq:85}, we can show that if $(\ell-1)t+1\leq i\leq \ell t$, for any $\ell\geq 1$,
\begin{flalign}\label{eq:95}
\mP^{d_1}_1\left(\widetilde \tau(b) > n_b'i\right)\leq t'\delta^{\ell}.
\end{flalign}
Combining \eqref{eq:94} with \eqref{eq:95} implies that
\begin{flalign}
\mE^{d_1}_1&\left[\frac{\widetilde \tau(b)}{n_b'}\right]\leq 1+\delta +\sum_{j=2}^\infty t'\delta^{j-1}\nn\\
&=\frac{1}{1-\delta}+t'\delta+(t'-1)\frac{\delta^2}{1-\delta}\nn\\
& \overset{\Delta}{=} 1+\delta'',
\end{flalign}
where $\delta''$ can be arbitrarily small for large $b$.
Therefore, as $b\rightarrow\infty$
\begin{flalign}
\mE^{d_1}_1[\widetilde \tau(b)]\leq b\left(\frac{c_1'}{I_1}+\frac{1-c_1'}{I_2}\right)(1+o(1)).
\end{flalign}

\bibliographystyle{IEEEbib}
\bibliography{QCD}

\end{document}